\newtheorem{theorem}{Theorem}[section]
\theoremstyle{plain}
\newtheorem{lemma}{Lemma}[section]
\newtheorem{remark}{Remark}[section]
\numberwithin{equation}{section}
\begin{document}
\title[Existence of the global attractor ]{Existence of the global attractor
for the plate equation with nonlocal nonlinearity in $%
\mathbb{R}
^{n}$}
\author{ Azer Khanmamedov \ }
\address{{\small Department of Mathematics, Faculty of Science, Hacettepe
University, Beytepe 06800, Ankara, Turkey}}
\email{azer@hacettepe.edu.tr}
\author{ \ Sema Simsek }
\address{{\small Department of Mathematics, Faculty of Science, Hacettepe
University, Beytepe 06800, Ankara, Turkey}}
\email{semasimsek@hacettepe.edu.tr}
\subjclass[2000]{35B41, 35G20, 74K20}
\keywords{plate equation, global attractor}

\begin{abstract}
We consider Cauchy problem for the semilinear plate equation with nonlocal
nonlinearity. Under mild conditions on the damping coefficient, we prove
that the semigroup generated by this problem possesses a global attractor.
\end{abstract}

\maketitle

\section{Introduction}

\bigskip In this paper, we study the long-time behavior of the solutions for
the following plate equation with localized damping and nonlocal
nonlinearity in terms of global attractors:%
\begin{equation}
u_{tt}+\Delta ^{2}u+\alpha (x)u_{t}+\lambda u+f(\left\Vert u\left( t\right)
\right\Vert _{L^{p}\left(
\mathbb{R}
^{n}\right) })\left\vert u\right\vert ^{p-2}u=h\left( x\right) \text{, \ \ \
}(t,x)\in (0,\infty )\times
\mathbb{R}
^{n}\text{.}  \tag{1.1}
\end{equation}

Plate equations have been investigated for many years due to their
importance in some physical areas such as vibration and elasticity theories
of solid mechanics. For instance, in the case when $f\left( \cdot \right) $
is identically constant, equation (1.1) becomes an equation with local
polynomial nonlinearity which arises in aeroelasticity modeling (see, for
example, [7], [8]), whereas in the case when $p=2$, the nonlinearity $%
f(\left\Vert u\right\Vert _{L^{p}\left(
\mathbb{R}
^{n}\right) })\left\vert u\right\vert ^{p-2}u$ appears in the models of
Kerr-like medium (see [15], [21]).

The study of the long-time dynamics of evolution equations has become an
outstanding area during the recent decades. As it is well known, the
attractors can be used as a tool to describe the long-time dynamics of these
equations. In particular, there have been many works on the investigation of
the attractors for the plate equations over the last few years. For the
attractors of the plate equations with local and nonlocal nonlinearities in
bounded domains, we refer to [3], [5], [14], [16-19] and [22]. In the case
of unbounded domains, owing to the lack of Sobolev compact embedding
theorems, there are difficulties in applying the methods given for bounded
domains. In order to overcome these difficulties, the authors of [9-10],
[13] and [23] established the uniform tail estimates for the plate equations
with local nonlinearities and then used the weak continuity of the nonlinear
source operators.

In the case when the domain is unbounded and the equation includes nonlocal
nonlinearity, an additional obstacle occurs. For equation (1.1), this
obstacle is caused by the operator defined by $F\left( u\right)
:=f(\left\Vert u\right\Vert _{L^{p}\left(
\mathbb{R}
^{n}\right) })\left\vert u\right\vert ^{p-2}u$ , because the operator $F$,
besides being not compact, is not also weakly continuous from $H^{2}\left(
\mathbb{R}
^{n}\right) $ to $L^{2}\left(
\mathbb{R}
^{n}\right) $. This situation does not allow us to apply the standard
splitting method and the energy method devised in [2]. Recently in [1], the
obstacle mentioned above is handled for the nonlinearity $f(\left\Vert
\nabla u\right\Vert _{L^{2}\left(
\mathbb{R}
^{n}\right) })\Delta u$ by using compensated compactness method introduced
in [11]. In that paper, the strictly positivity condition on the damping
coefficient $\alpha \left( \cdot \right) $ is critically used. In the
present paper, we replace this condition with the weaker conditions (see
(2.3), (2.4)), and by using effectiveness of the dissipation for large
enough $x$, we prove the existence of the global attractor which equals the
unstable manifold of the set of stationary points.

The paper is organized as follows: In Section 2, we give the statement of
the problem and the main result. In Section 3, we firstly prove two
auxiliary lemmas and then establish the asymptotic compactness of the
solution, which together with the presence of the strict Lyapunov function
leads to the existence of the global attractor.

\section{\ Statement of the problem and the main result}

We consider the following initial value problem%
\begin{equation}
u_{tt}+\Delta ^{2}u+\alpha (x)u_{t}+\lambda u+f(\left\Vert u\left( t\right)
\right\Vert _{L^{p}\left(
\mathbb{R}
^{n}\right) })\left\vert u\right\vert ^{p-2}u=h\left( x\right) \text{, \ \ \
}(t,x)\in (0,\infty )\times
\mathbb{R}
^{n}\text{,}  \tag{2.1}
\end{equation}%
\begin{equation}
u(0,x)=u_{0}(x)\text{, \ }u_{t}(0,x)=u_{1}(x)\text{, \ \ }x\in
\mathbb{R}
^{n}\text{,}  \tag{2.2}
\end{equation}%
where $\lambda >0,$ $h\in L^{2}\left(
\mathbb{R}
^{n}\right) ,$ $p\geq 2,$ $p\left( n-4\right) \leq 2n-4$ and the functions $%
\alpha \left( \cdot \right) $, $f\left( \cdot \right) $ satisfy the
following conditions:%
\begin{equation}
\alpha \in L^{\infty }(%
\mathbb{R}
^{n})\text{, }\alpha (\cdot )>0\text{ \ a.e. in }%
\mathbb{R}
^{n}\text{,}  \tag{2.3}
\end{equation}%
\begin{equation}
\alpha (\cdot )\geq \alpha _{0}>0\text{ a.e. in }\left\{ x\in
\mathbb{R}
^{n}:\left\vert x\right\vert \geq r_{0}\right\} \text{, for some }r_{0}\text{%
,}  \tag{2.4}
\end{equation}%
\begin{equation}
f\in C^{1}(%
\mathbb{R}
^{+})\text{, \ }f\left( \cdot \right) \geq 0\text{.}  \tag{2.5}
\end{equation}

Applying the semigroup theory (see [4, p.56-58]) and repeating the arguments
done in the introduction of [1], one can prove the following well-posedness
result.

\begin{theorem}
Assume that the conditions (2.3)-(2.5) hold. Then for every $T>0$ and $%
\left( u_{0},u_{1}\right) \in H^{2}\left(
\mathbb{R}
^{n}\right) \times L^{2}\left(
\mathbb{R}
^{n}\right) ,$ problem (2.1)-(2.2) has a unique weak solution $u\in C\left(
[0,T];H^{2}\left(
\mathbb{R}
^{n}\right) \right) \cap $\newline
$C^{1}\left( [0,T];L^{2}\left(
\mathbb{R}
^{n}\right) \right) $ which satisfies the energy equality
\begin{equation*}
E_{%
\mathbb{R}
^{n}}\left( u\left( t\right) \right) +\frac{1}{p}F\left( \left\Vert u\left(
t\right) \right\Vert _{L^{p}\left(
\mathbb{R}
^{n}\right) }^{p}\right) -\int\limits_{%
\mathbb{R}
^{n}}h\left( x\right) u\left( t,x\right) dx+\int\limits_{s}^{t}\int\limits_{%
\mathbb{R}
^{n}}\alpha \left( x\right) \left\vert u_{t}\left( \tau ,x\right)
\right\vert ^{2}dxd\tau
\end{equation*}%
\begin{equation}
\text{ }=E_{%
\mathbb{R}
^{n}}\left( u\left( s\right) \right) +\frac{1}{p}F\left( \left\Vert u\left(
s\right) \right\Vert _{L^{p}\left(
\mathbb{R}
^{n}\right) }^{p}\right) -\int\limits_{%
\mathbb{R}
^{n}}h\left( x\right) u\left( s,x\right) dx\text{, \ }\forall t\geq s\geq 0%
\text{,}  \tag{2.6}
\end{equation}%
where $F\left( z\right) =\int\limits_{0}^{z}f\left( \sqrt[p]{s}\right) ds$,
for all $z\in
\mathbb{R}
^{+}$ and $E_{\Omega }\left( u\left( t\right) \right) =\frac{1}{2}%
\int\limits_{\Omega }(\left\vert u_{t}\left( t,x\right) \right\vert
^{2}+\left\vert {\small \Delta u}\left( t,x\right) \right\vert ^{2}+\lambda
\left\vert {\small u}\left( t,x\right) \right\vert ^{2})dx$, for subset $%
\Omega \subset
\mathbb{R}
^{n}$. Moreover, if $\left( u_{0},u_{1}\right) \in H^{4}\left(
\mathbb{R}
^{n}\right) \times H^{2}\left(
\mathbb{R}
^{n}\right) $, then $u\in C\left( [0,T];H^{4}\left(
\mathbb{R}
^{n}\right) \right) \cap C^{1}\left( [0,T];H^{2}\left(
\mathbb{R}
^{n}\right) \right) $.

In addition, if $v,w\in C\left( [0,T];H^{2}\left(
\mathbb{R}
^{n}\right) \right) \cap C^{1}\left( [0,T];L^{2}\left(
\mathbb{R}
^{n}\right) \right) $ are the weak \ solutions to (2.1)-(2.2) with initial
data $\left( v_{0},v_{1}\right) \in H^{2}\left(
\mathbb{R}
^{n}\right) \times L^{2}\left(
\mathbb{R}
^{n}\right) $ and $\left( w_{0},w_{1}\right) \in H^{2}\left(
\mathbb{R}
^{n}\right) \times L^{2}\left(
\mathbb{R}
^{n}\right) $, then
\begin{equation*}
\left\Vert v(t)-w(t)\right\Vert _{H^{2}\left(
\mathbb{R}
^{n}\right) }+\left\Vert v_{t}(t)-w_{t}(t)\right\Vert _{L^{2}(%
\mathbb{R}
^{n})}
\end{equation*}%
\begin{equation*}
\leq c(T,\widetilde{r})\left( \left\Vert v_{0}-w_{0}\right\Vert
_{H^{2}\left(
\mathbb{R}
^{n}\right) }+\left\Vert v_{1}-w_{1}\right\Vert _{L^{2}\left(
\mathbb{R}
^{n}\right) }\right) \text{, \ }\forall t\in \lbrack 0,T]\text{,}
\end{equation*}%
where $c:R_{+}\times R_{+}\rightarrow R_{+}$ \ is a nondecreasing function
with respect to each variable and $\widetilde{r}=\max \left\{ \left\Vert
(v_{0},v_{1})\right\Vert _{H^{2}\left(
\mathbb{R}
^{n}\right) \times L^{2}\left(
\mathbb{R}
^{n}\right) },\left\Vert (w_{0},w_{1})\right\Vert _{H^{2}\left(
\mathbb{R}
^{n}\right) \times L^{2}\left(
\mathbb{R}
^{n}\right) }\right\} $.
\end{theorem}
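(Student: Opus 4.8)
The plan is to recast (2.1)--(2.2) as the abstract semilinear Cauchy problem $U'(t)=\mathcal{A}U(t)+\mathcal{N}(U(t))$, $U(0)=(u_0,u_1)$, on the Hilbert space $\mathcal{H}=H^2(\mathbb{R}^n)\times L^2(\mathbb{R}^n)$, where $\mathcal{A}(u,v)=(v,\,-\Delta^2u-\lambda u-\alpha(\cdot)v)$ with $D(\mathcal{A})=H^4(\mathbb{R}^n)\times H^2(\mathbb{R}^n)$ and $\mathcal{N}(u,v)=(0,\,h-f(\|u\|_{L^p(\mathbb{R}^n)})|u|^{p-2}u)$. I would endow $\mathcal{H}$ with the equivalent inner product $\langle(u,v),(\tilde u,\tilde v)\rangle=\int_{\mathbb{R}^n}(\Delta u\,\Delta\tilde u+\lambda u\tilde u+v\tilde v)\,dx$ (equivalent since $\lambda>0$ and $\|\Delta u\|_{L^2}^2+\|u\|_{L^2}^2\simeq\|u\|_{H^2}^2$ on $\mathbb{R}^n$), and check by integration by parts that $\langle\mathcal{A}(u,v),(u,v)\rangle=-\int_{\mathbb{R}^n}\alpha(x)|v|^2\,dx\le0$ by (2.3), so that $\mathcal{A}$ is dissipative. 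For the range condition $\mathrm{Range}(I-\mathcal{A})=\mathcal{H}$, I would reduce $(I-\mathcal{A})(u,v)=(\phi,\psi)$ (so $v=u-\phi$) to the elliptic equation $\Delta^2u+(1+\lambda+\alpha)u=\psi+\phi+\alpha\phi\in L^2(\mathbb{R}^n)$, uniquely solvable in $H^2(\mathbb{R}^n)$ by the Lax--Milgram theorem (the bilinear form is coercive because $\alpha\ge0$ and $\lambda>0$), after which $\Delta^2u\in L^2$ forces $u\in H^4(\mathbb{R}^n)$, hence $(u,v)\in D(\mathcal{A})$. By the Lumer--Phillips theorem, $\mathcal{A}$ then generates a $C_0$-semigroup of contractions on $\mathcal{H}$.

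Next I would show that $\mathcal{N}$ is Lipschitz continuous on bounded subsets of $\mathcal{H}$. The hypothesis $p(n-4)\le2n-4$ is precisely what makes $2(p-1)$ an admissible Sobolev exponent for $H^2(\mathbb{R}^n)$ (it reaches at most the critical value $\tfrac{2n}{n-4}$ when $n>4$, and is unrestricted when $n\le4$), so that $H^2(\mathbb{R}^n)\hookrightarrow L^{2(p-1)}(\mathbb{R}^n)\cap L^p(\mathbb{R}^n)$; this gives $|u|^{p-2}u\in L^2$ for $u\in H^2$, and, using $\big||a|^{p-2}a-|b|^{p-2}b\big|\le C(|a|^{p-2}+|b|^{p-2})|a-b|$ together with Hölder's inequality with exponents $\tfrac{p-1}{p-2}$ and $p-1$, makes $u\mapsto|u|^{p-2}u$ Lipschitz from bounded sets of $L^{2(p-1)}$ into $L^2$. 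Since $u\mapsto\|u\|_{L^p}$ is globally Lipschitz on $L^p$ and $f\in C^1(\mathbb{R}^+)$ is Lipschitz on compact subintervals, splitting $f(\|u\|_{L^p})|u|^{p-2}u-f(\|w\|_{L^p})|w|^{p-2}w=f(\|u\|_{L^p})(|u|^{p-2}u-|w|^{p-2}w)+(f(\|u\|_{L^p})-f(\|w\|_{L^p}))|w|^{p-2}w$ gives the claim. With $\mathcal{A}$ a generator and $\mathcal{N}$ locally Lipschitz, the standard theory of abstract semilinear equations ([4, p.~56--58]) yields, for each $(u_0,u_1)\in\mathcal{H}$, a unique maximal mild solution $U=(u,u_t)\in C([0,T_{\max});\mathcal{H})$, which is also the weak solution of (2.1)--(2.2).

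To obtain global existence and the energy equality I would first pass to strong solutions, i.e. data in $D(\mathcal{A})=H^4\times H^2$, for which the mild solution is strong ($u\in C([0,T];H^4)\cap C^1([0,T];H^2)$, $u_{tt}\in C([0,T];L^2)$); this follows from the regularity part of the abstract theory, or by noting that $w:=u_t$ solves a plate equation with data $(u_1,u_{tt}(0))\in H^2\times L^2$ and then recovering $u\in C([0,T];H^4)$ from $\Delta^2u=h-u_{tt}-\alpha u_t-\lambda u-f(\|u\|_{L^p})|u|^{p-2}u\in C([0,T];L^2)$ by elliptic regularity. For such solutions, multiplying (2.1) by $u_t$, integrating over $(s,t)\times\mathbb{R}^n$, and using $\tfrac{d}{dt}\big[\tfrac{1}{p}F(\|u(t)\|_{L^p}^p)\big]=f(\|u(t)\|_{L^p})\int_{\mathbb{R}^n}|u|^{p-2}u\,u_t\,dx$ (since $F'(z)=f(\sqrt[p]{z})$) yields the energy equality (2.6); as $f\ge0$ gives $F\ge0$, (2.6) together with Cauchy--Schwarz on the term $\int_{\mathbb{R}^n}h u\,dx$ (absorbed using the $\lambda\|u\|_{L^2}^2$ term) gives $\|u(t)\|_{H^2}^2+\|u_t(t)\|_{L^2}^2\le c(\|u_0\|_{H^2},\|u_1\|_{L^2},\|h\|_{L^2})$ uniformly in $t$, so the blow-up alternative forces $T_{\max}=\infty$. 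Approximating general data in $\mathcal{H}$ by data in $D(\mathcal{A})$ and invoking the continuous-dependence estimate below, the a priori bound and the identity (2.6) persist for weak solutions, which are thus global.

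Finally, for the continuous-dependence estimate (which also gives uniqueness): with $z=v-w$, subtracting the two equations, multiplying by $z_t$ and integrating gives $\tfrac{d}{dt}\big(\|z_t\|_{L^2}^2+\|\Delta z\|_{L^2}^2+\lambda\|z\|_{L^2}^2\big)+2\int_{\mathbb{R}^n}\alpha|z_t|^2\,dx=-2\int_{\mathbb{R}^n}\big(f(\|v\|_{L^p})|v|^{p-2}v-f(\|w\|_{L^p})|w|^{p-2}w\big)z_t\,dx$; bounding the right-hand side by the local Lipschitz estimate for $\mathcal{N}$ (with constant depending on $\widetilde{r}$ through the uniform a priori bound) by $C(\widetilde{r})(\|z\|_{H^2}^2+\|z_t\|_{L^2}^2)$, discarding the nonnegative damping term, and applying Gronwall's inequality yields the stated bound with $c(T,\widetilde{r})$ nondecreasing in each variable. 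The argument is largely routine; the points that need care are the elliptic solvability behind the range condition, the exponent bookkeeping at the critical case $2(p-1)=\tfrac{2n}{n-4}$ (which also governs whether $\mathcal{N}$ is continuously Fréchet differentiable from $H^2$ into $L^2$, and hence the cleanest route to the $H^4\times H^2$ regularity), and the approximation argument transferring (2.6) and the uniform bound from strong to weak solutions — this last being the main technical hurdle.
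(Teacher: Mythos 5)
Your proposal is correct and follows essentially the same route the paper indicates: the paper does not prove Theorem 2.1 but simply invokes the semigroup theory of [4, p.~56--58] (Lumer--Phillips for the linear plate operator, local Lipschitz continuity of the nonlocal nonlinearity under the Sobolev restriction $p(n-4)\le 2n-4$, energy identity for strong solutions plus density) together with the arguments of [1], which is precisely the argument you have spelled out. Your sketch, including the dissipativity computation, the Lax--Milgram range condition, the a priori bound from (2.6) yielding global existence, and the Gronwall-based continuous dependence, matches the intended proof.
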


Thus, according to Theorem 2.1, by the formula $\left( u(t),u_{t}(t)\right)
=S\left( t\right) \left( u_{0},u_{1}\right) $, problem (2.1)-(2.2) generates
a strongly continuous semigroup $\left\{ S\left( t\right) \right\} _{t\geq
0} $ in $H^{2}\left(
\mathbb{R}
^{n}\right) \times L^{2}\left(
\mathbb{R}
^{n}\right) $, where $u(t,\cdot )$ is the weak solution of (2.1)-(2.2),
determined by Theorem 2.1, with initial data $\left( u_{0},u_{1}\right) $.

Now, we are in a position to state our main result.

\begin{theorem}
Under the conditions (2.3)-(2.5), the semigroup $\left\{ S\left( t\right)
\right\} _{t\geq 0}$ generated by the problem (2.1)-(2.2) possesses a global
attractor $\mathcal{A}$ in $H^{2}\left(
\mathbb{R}
^{n}\right) \times L^{2}\left(
\mathbb{R}
^{n}\right) $ and $\mathcal{A=M}^{u}\left( \mathcal{N}\right) $. Here $%
\mathcal{M}^{u}\left( \mathcal{N}\right) $ is unstable manifold emanating
from the set of stationary points $\mathcal{N}$ (for definition, see [6,
p.359]).

\begin{remark}
We note that by using the method of this paper, one can prove the existence
of the global attractors for the initial boundary value problems%
\begin{equation*}
\left\{
\begin{array}{c}
u_{itt}+(-\Delta )^{i}u_{i}+\alpha (x)u_{it}+\lambda u_{i}+f(\left\Vert
u_{i}\left( t\right) \right\Vert _{L^{p}\left(
\mathbb{R}
^{n}\right) })\left\vert u_{i}\right\vert ^{p_{i}-2}u_{i}=h\left( x\right)
\text{, \ \ }(t,x)\in (0,\infty )\times \Omega \text{,} \\
u_{i}(t,x)=\left( \frac{\partial }{\partial \nu }\right) ^{i-1}u_{i}(t,x)=0%
\text{, \ \ \ \ \ \ \ \ \ \ \ \ \ \ \ \ \ \ \ \ \ \ \ \ \ \ \ \ \ \ \ \ \ \
\ \ \ \ \ \ \ \ \ \ \ \ \ \ }(t,x)\in (0,\infty )\times \partial \Omega
\text{,} \\
u_{i}(0,x)=u_{0i}(x)\text{, \ \ \ \ }u_{it}(0,x)=u_{1i}(x)\text{, \ \ \ \ \
\ \ \ \ \ \ \ \ \ \ \ \ \ \ \ \ \ \ \ \ \ \ \ \ \ \ \ \ \ \ \ \ \ \ \ \ \ \
\ \ \ \ \ \ \ \ \ \ \ }x\in \Omega \text{,}%
\end{array}%
\right.
\end{equation*}%
where $\Omega \subset
\mathbb{R}
^{n}$ is an unbounded domain with smooth boundary, $\nu $ is outer unit
normal vector, $\lambda >0$, $h\in L^{2}\left( \Omega \right) $, $p_{i}\geq 2
$, $p_{i}\left( n-2i\right) \leq 2n-2i$, $i=1,2$, the function $f(\cdot )$
satisfies the condition (2.5) and the damping coefficient $\alpha (\cdot )$
satisfies the following conditions%
\begin{equation*}
\alpha \in L^{\infty }(\Omega )\text{, \ \ }\alpha (\cdot )>0\text{, \ a.e.
in \ }\Omega \text{,}
\end{equation*}%
\begin{equation*}
\alpha \left( \cdot \right) \geq \alpha _{0}>0\text{, \ a.e. in \ }\omega
\text{, \ for some }\omega \subset \Omega \text{, such that }
\end{equation*}%
$\omega $ is the union of a neighbourhood of the boundary $\partial \Omega $
and $\left\{ x\in \Omega :\left\vert x\right\vert \geq r_{0}\right\} $, for
some $r_{0}$.
\end{remark}
\end{theorem}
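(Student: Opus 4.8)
The plan is to verify the two standard ingredients needed for the characterization $\mathcal{A}=\mathcal{M}^u(\mathcal{N})$: first, the dissipativity of the semigroup $\{S(t)\}_{t\ge 0}$ (existence of a bounded absorbing set in $H^2(\mathbb{R}^n)\times L^2(\mathbb{R}^n)$); second, the asymptotic compactness of $\{S(t)\}_{t\ge 0}$, i.e. that for any bounded sequence of initial data $\{(u_{0k},u_{1k})\}$ and times $t_k\to\infty$, the sequence $S(t_k)(u_{0k},u_{1k})$ has a convergent subsequence. Together these give the existence of the global attractor by the classical theory (e.g.\ [6]). Then, using the energy equality (2.6), the quantity $\mathcal{E}(u(t)):=E_{\mathbb{R}^n}(u(t))+\tfrac1p F(\|u(t)\|_{L^p}^p)-\int_{\mathbb{R}^n}h(x)u(t,x)dx$ is nonincreasing along trajectories and strictly decreasing off the set of equilibria (since $\alpha(\cdot)>0$ a.e.\ by (2.3), so $\int\alpha|u_t|^2\,dx=0$ forces $u_t\equiv 0$); this is exactly a strict Lyapunov function, and invoking the standard result on gradient systems yields $\mathcal{A}=\mathcal{M}^u(\mathcal{N})$.

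For the \textbf{absorbing set}, I would feed into (2.6) the standard multiplier $u_t+\varepsilon u$ for a small $\varepsilon>0$, producing a differential inequality of the form $\frac{d}{dt}\Phi(t)+\delta\Phi(t)\le C$ with $\Phi(t)\simeq E_{\mathbb{R}^n}(u(t))+\tfrac1p F(\|u(t)\|_{L^p}^p)-\int h u\,dx$. The subtle point here, compared with the strictly-positive-damping case of [1], is that $\alpha$ need only be bounded below away from a large ball (condition (2.4)); so the term $\int_{\mathbb{R}^n}\alpha(x)\,u\,u_t\,dx$ and the need to dominate $\varepsilon\|u\|_{L^2}^2$ must be controlled using the dissipation only on $\{|x|\ge r_0\}$ together with elliptic regularity / a Poincaré-type argument on the complement — one absorbs the low-frequency part by noting that $\Delta^2$ plus $\lambda$ already controls $\|u\|_{H^2}^2$, and the interior ball $\{|x|<r_0\}$ is handled by interpolation against $\|\Delta u\|_{L^2}^2$. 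Gronwall then yields a bounded set $\mathcal{B}$ absorbing all bounded sets.

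For \textbf{asymptotic compactness} — the heart of the matter — I expect the main obstacle to be precisely the one flagged in the introduction: the nonlocal operator $F(u)=f(\|u\|_{L^p})|u|^{p-2}u$ is neither compact nor weakly continuous from $H^2$ to $L^2$, so neither the splitting method nor the energy method of [2] applies directly. The plan is to combine a \emph{uniform tail estimate} — showing $\int_{|x|\ge R}(|u_t|^2+|\Delta u|^2+\lambda|u|^2)\,dx$ is uniformly small for large $R$, uniformly in time, which is where condition (2.4) is essential since the damping is effective exactly there — with a \emph{compensated-compactness / energy-reconstruction argument} in the style of [11] (as used in [1]) to handle the behavior on the fixed ball $\{|x|<R\}$. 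Concretely, writing $u^k(t)=S(t+t_k)(u_{0k},u_{1k})$ and passing to weak limits, one writes the energy of the difference $u^k-u^j$ and uses (2.6) to express it via terms that, after the tail estimate, live on a bounded region where one still cannot invoke compactness for $F$ directly; the compensated-compactness trick is to pair the limit equation against the weak limit and exploit the specific structure $f(\|u^k\|_{L^p})$, whose argument $\|u^k(t)\|_{L^p}$ \emph{does} converge (being a scalar bounded sequence of functions of $t$, handled via Arzelà–Ascoli after an equicontinuity estimate), so that the nonlocal coefficient decouples and the remaining $|u|^{p-2}u$ term becomes amenable. Once asymptotic compactness and dissipativity are in hand, together with the strict Lyapunov function above, the existence of $\mathcal{A}$ and the identity $\mathcal{A}=\mathcal{M}^u(\mathcal{N})$ follow from the standard theory of gradient semigroups.
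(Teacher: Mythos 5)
Your overall architecture (asymptotic compactness + strict Lyapunov function + gradient-system theory to get $\mathcal{A}=\mathcal{M}^{u}(\mathcal{N})$) is the same as the paper's, but there is a genuine problem with your absorbing-set step, and it is worth noting that the paper deliberately avoids that step. With the multiplier $u_{t}+\varepsilon u$ the term $\varepsilon\Vert u_{t}\Vert _{L^{2}(\mathbb{R}^{n})}^{2}$ appears on the right-hand side and must be absorbed by the dissipation $\int_{\mathbb{R}^{n}}\alpha (x)\left\vert u_{t}\right\vert ^{2}dx$. Under (2.3)--(2.4) the coefficient $\alpha$ may degenerate to zero inside $B(0,r_{0})$, so $\varepsilon \Vert u_{t}\Vert _{L^{2}(B(0,r_{0}))}^{2}$ cannot be dominated by $\int_{B(0,r_{0})}\alpha \left\vert u_{t}\right\vert ^{2}dx$ with any uniform constant; your proposed fixes (Poincar\'{e}-type control of $\Vert u\Vert _{L^{2}}$, interpolation against $\Vert \Delta u\Vert _{L^{2}}^{2}$) address the displacement terms but not this kinetic term, which is the actual obstruction. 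The paper never constructs an absorbing set: it only records the uniform bound (3.19), which follows directly from the monotonicity and coercivity of the Lyapunov function in (2.6), and then invokes [6, Corollary 7.5.7], a result for gradient systems that yields the compact global attractor (and the identity with the unstable manifold) from asymptotic compactness, the strict Lyapunov function, and boundedness of $\mathcal{N}$ alone. So you should drop the absorbing-set construction rather than try to repair it.

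On asymptotic compactness your sketch names the right ingredients (tail estimates from (2.4), compensated compactness in the spirit of [11] and [1], convergence of the scalar quantity $\Vert v_{m}(t)\Vert _{L^{p}(\mathbb{R}^{n})}$), but it stops exactly where the difficulty begins. The delicate point, which occupies Lemmas 3.1 and 3.2 of the paper, is the term $\int_{0}^{t}\int \tau \left( F(v_{m})-F(v_{l})\right) \left( v_{mt}-v_{lt}\right) $ with $F(u)=f(\Vert u\Vert _{L^{p}})\left\vert u\right\vert ^{p-2}u$: on the exterior region one must differentiate the nonlocal coefficient $f(\Vert v_{m}(\tau )\Vert _{L^{p}(\mathbb{R}^{n})})$ in time, and this produces $\int_{\mathbb{R}^{n}}\left\vert v_{m}\right\vert ^{p-1}\left\vert v_{mt}\right\vert dx$, which includes a contribution from the interior ball where the damping degenerates. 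The paper handles this by truncating $f$ near zero ($f_{\varepsilon }$), truncating the amplitude ($\varphi _{M}$, $\Psi _{\varepsilon }$), and crucially by the estimate (3.17), which bounds $\Vert v_{mt}\Vert _{L^{1}(B(0,r_{0}))}^{2}$ by $\frac{c}{\lambda ^{2}}\int \alpha \left\vert v_{mt}\right\vert ^{2}dx+c\int_{B(0,r_{0})}\left( \frac{\lambda }{\alpha +\lambda }\right) ^{2}dx$ and uses dominated convergence (here (2.3), $\alpha >0$ a.e., is used quantitatively, not just to identify equilibria) to make the second term small; a final Gronwall argument with the integrable weight $\Vert \sqrt{\alpha }v_{mt}\Vert _{L^{2}}^{2}$ closes the estimate. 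Without some substitute for this mechanism, the ``decoupling of the nonlocal coefficient'' you invoke does not go through on the region where $\alpha $ is not bounded below.
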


\section{Proof of Theorem 2.2}

We start with the following lemmas.

\begin{lemma}
Assume that the condition (2.5) holds. Also, assume that\ the sequence $%
\left\{ v_{m}\right\} _{m=1}^{\infty }$ is weakly star convergent in $%
L^{\infty }\left( 0,\infty ;H^{2}\left(
\mathbb{R}
^{n}\right) \right) $, the sequence $\left\{ v_{mt}\right\} _{m=1}^{\infty }$
is bounded in $L^{\infty }\left( 0,\infty ;L^{2}\left(
\mathbb{R}
^{n}\right) \right) $ and the sequence $\left\{ \left\Vert v_{m}\left(
t\right) \right\Vert _{L^{p}\left(
\mathbb{R}
^{n}\right) }\right\} _{m=1}^{\infty }$ is convergent, for all $t\geq 0$.
Then, for all $r>0$%
\begin{equation*}
\underset{m\rightarrow \infty }{\lim }\text{ }\underset{l\rightarrow \infty }%
{\lim \sup }\left\vert \int\limits_{0}^{t}\int\limits_{B\left( 0,r\right)
}\tau \left( f\left( \left\Vert v_{m}\left( \tau \right) \right\Vert
_{L^{p}\left(
\mathbb{R}
^{n}\right) }\right) \left\vert v_{m}\left( \tau ,x\right) \right\vert
^{p-2}v_{m}(\tau ,x)\right. \right.
\end{equation*}%
\begin{equation*}
\left. \left. -f\left( \left\Vert v_{l}\left( \tau \right) \right\Vert
_{L^{p}\left(
\mathbb{R}
^{n}\right) }\right) \left\vert v_{l}(\tau ,x)\right\vert ^{p-2}v_{l}(\tau
,x)\right) \left( v_{mt}\left( \tau ,x\right) -v_{lt}\left( \tau ,x\right)
\right) dxd\tau \right\vert =0,\text{ \ }\forall t\geq 0,
\end{equation*}%
where $B\left( 0,r\right) =\left\{ x\in
\mathbb{R}
^{n}:\left\vert x\right\vert <r\right\} $.
\end{lemma}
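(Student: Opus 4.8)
The plan is to split the integrand into a "diagonal" scalar‐coefficient part plus an error, and handle each separately. Write
$a_m(\tau)=f(\|v_m(\tau)\|_{L^p(\mathbb{R}^n)})$ and note that by hypothesis $\{\|v_m(\tau)\|_{L^p}\}_m$ converges for each fixed $\tau$, say to $\rho(\tau)$; since the sequence is bounded in $L^\infty(0,\infty;H^2)$ and $H^2(\mathbb{R}^n)\hookrightarrow L^p(\mathbb{R}^n)$ under the stated restriction on $p$, the $\|v_m(\tau)\|_{L^p}$ stay in a fixed bounded set, so by continuity of $f$ (condition (2.5)) the numbers $a_m(\tau)$ are bounded uniformly in $m$ and $\tau\in[0,t]$ and $a_m(\tau)\to f(\rho(\tau))=:a(\tau)$ pointwise in $\tau$. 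The difference inside the integral is
\[
a_m|v_m|^{p-2}v_m-a_l|v_l|^{p-2}v_l
=a_m\bigl(|v_m|^{p-2}v_m-|v_l|^{p-2}v_l\bigr)+(a_m-a_l)|v_l|^{p-2}v_l .
\]
For the second term, after integrating against $(v_{mt}-v_{lt})$ over $B(0,r)$ and $[0,t]$ and using $|a_m(\tau)-a_l(\tau)|\le |a_m(\tau)-a(\tau)|+|a(\tau)-a_l(\tau)|$, Hölder's inequality bounds the spatial integral by $\||v_l(\tau)|^{p-2}v_l(\tau)\|_{L^2(B(0,r))}\,(\|v_{mt}(\tau)\|_{L^2}+\|v_{lt}(\tau)\|_{L^2})$, which is uniformly bounded (again by the Sobolev embedding $H^2\hookrightarrow L^{2(p-1)}$ on $B(0,r)$, which holds in the admissible range of $p$, together with the $L^\infty_tL^2$ bound on $v_{mt}$); hence this contribution is dominated by $C\int_0^t(|a_m-a|+|a-a_l|)\,d\tau$, which tends to $0$ first as $l\to\infty$ and then as $m\to\infty$ by the dominated convergence theorem.

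The main work is the first term, $\int_0^t\!\!\int_{B(0,r)}\tau\,a_m(\tau)\bigl(|v_m|^{p-2}v_m-|v_l|^{p-2}v_l\bigr)(v_{mt}-v_{lt})\,dx\,d\tau$. Since $|a_m(\tau)|\le M$ uniformly, it suffices to show the double limit of $\bigl|\int_0^t\!\!\int_{B(0,r)}\tau\bigl(|v_m|^{p-2}v_m-|v_l|^{p-2}v_l\bigr)(v_{mt}-v_{lt})\,dx\,d\tau\bigr|$ is zero — but this is exactly the type of quantity that appears for the \emph{local} polynomial nonlinearity $|u|^{p-2}u$, for which the compensated‑compactness / weak‑continuity machinery is available: the key point is that on the bounded ball $B(0,r)$ the embedding $H^2(B(0,r))\hookrightarrow\hookrightarrow W^{1,q}(B(0,r))$ is compact for a suitable $q$, so passing to subsequences $v_m\to v$ strongly in $C([0,t];L^{2(p-1)}(B(0,r)))$ (interpolating the uniform $H^2$ bound with the $H^1$ compactness coming from the uniform bound on $v_{mt}$ via Aubin–Lions), whence $|v_m|^{p-2}v_m\to|v|^{p-2}v$ strongly in $L^2$ in space, uniformly in $t\le T$; meanwhile $v_{mt}\rightharpoonup v_t$ weakly. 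The product of a strongly convergent and a weakly convergent factor converges, and both the $m$‑ and $l$‑indexed pieces converge to the \emph{same} limit $\int_0^t\!\!\int_{B(0,r)}\tau\,|v|^{p-2}v\,v_t\,dx\,d\tau - (\text{same})$, so the difference $\to 0$. The iterated $\limsup_{l}$ followed by $\lim_m$ is then $0$ along the whole sequence because any subsequence has a further subsequence along which the above argument runs.

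The step I expect to be the main obstacle is making the strong‑convergence claim for $v_m$ rigorous: one only has $v_m$ bounded in $L^\infty(0,\infty;H^2)$ and $v_{mt}$ bounded in $L^\infty(0,\infty;L^2)$, with weak‑star (not norm) convergence assumed, so one must extract subsequences and invoke an Aubin–Lions–Simon compactness argument on the bounded set $B(0,r)\times[0,t]$ to upgrade to strong convergence of $v_m$ in $C([0,t];L^{2(p-1)}(B(0,r)))$, and then argue that the final double‑limit value is independent of which subsequence was chosen, so that the original iterated limit vanishes. Care is also needed to verify that the exponent $2(p-1)$ is subcritical for $H^2(B(0,r))$ precisely under the hypothesis $p(n-4)\le 2n-4$, so that all the Hölder/embedding estimates above are legitimate.
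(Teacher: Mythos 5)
Your decomposition is genuinely different from the paper's and, in the strictly subcritical range $p(n-4)<2n-4$, it does work: splitting off the scalar coefficient as $a_m(|v_m|^{p-2}v_m-|v_l|^{p-2}v_l)+(a_m-a_l)|v_l|^{p-2}v_l$, disposing of the second piece by pointwise bounded convergence of $a_m$ plus dominated convergence, and treating the first piece as four strong-times-weak pairings via Aubin--Lions--Simon is a cleaner route than the paper's. (The paper instead truncates $f$ to an $f_\varepsilon$ which is constant near $0$, precisely so that $f_\varepsilon(\|v_m(\cdot)\|_{L^p})$ is bounded in $W^{1,\infty}(0,\infty)$ --- the derivative of $f(\|v_m(t)\|_{L^p})$ carries the factor $\|v_m(t)\|_{L^p}^{1-p}$, which is uncontrolled when the norm is small --- and then integrates by parts in time using $\int_{B(0,r)}|v_m|^{p-2}v_m\,v_{mt}\,dx=\frac1p\frac{d}{dt}\|v_m(t)\|_{L^p(B(0,r))}^p$ for the two diagonal terms. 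Your route avoids both the truncation and the integration by parts.) One small imprecision: the sentence ``since $|a_m|\le M$ it suffices to show the double limit of $|\int\tau(|v_m|^{p-2}v_m-|v_l|^{p-2}v_l)(v_{mt}-v_{lt})|$ is zero'' is not literally valid, because the inner integrand changes sign; what actually saves you is the four-term expansion you describe next, in which $\tau a_m(\tau)|v_m|^{p-2}v_m$ converges strongly in $L^1(0,t;L^2(B(0,r)))$ and can be paired against the weak-star limits of $v_{mt}$ and $v_{lt}$.

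The genuine gap sits exactly at the point you flagged: the exponent check fails in the critical case. Under $p(n-4)\le 2n-4$ with equality (so $n\ge5$ and $p=\frac{2n-4}{n-4}$) one has $2(p-1)=\frac{2n}{n-4}$, the critical Sobolev exponent for $H^2$, and Aubin--Lions--Simon gives $v_m\to v$ in $C([0,T];L^q(B(0,r)))$ only for $q$ strictly below $\frac{2n}{n-4}$. Then $|v_m|^{p-2}v_m$ converges to $|v|^{p-2}v$ only weakly in $L^2(B(0,r))$ (boundedness in $L^2$ plus a.e.\ convergence), not strongly, and the diagonal term $\int\tau a_m|v_m|^{p-2}v_m\,v_{mt}$ becomes a pairing of two merely weakly convergent sequences, whose limit need not be the product of the limits. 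This is precisely where the paper's derivative structure is indispensable: rewriting the diagonal terms as $\frac1p\frac{d}{dt}\|v_m(t)\|_{L^p(B(0,r))}^p$ and integrating by parts in $\tau$ reduces the required convergence to that of $\|v_m(t)\|_{L^p(B(0,r))}$, i.e.\ strong convergence of $v_m$ in $L^p(B(0,r))$ with $p<\frac{2n}{n-4}$, which always holds. To repair your argument at the critical exponent you would have to import that integration-by-parts step for the two diagonal products (and with it the $f_\varepsilon$ truncation, or some other control of $\frac{d}{dt}f(\|v_m(t)\|_{L^p})$); the cross terms are fine as you treat them, since there one factor is frozen while the limit in the other index is taken.
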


\begin{proof}
Denote $f_{\varepsilon }\left( u\right) $ $=\left\{
\begin{array}{c}
f\left( u\right) ,\text{ \ \ \ }u\geq \varepsilon , \\
f\left( \varepsilon \right) ,\text{ }0\leq u<\varepsilon ,%
\end{array}%
\right. $ for $\varepsilon >0$. Then, we have%
\begin{equation*}
\left\vert f\left( \left\Vert v_{m}\left( \tau \right) \right\Vert
_{L^{p}\left(
\mathbb{R}
^{n}\right) }\right) -f_{\varepsilon }\left( \left\Vert v_{m}\left( \tau
\right) \right\Vert _{L^{p}\left(
\mathbb{R}
^{n}\right) }\right) \right\vert \leq \max_{0\leq s_{1},s_{2}\leq
\varepsilon }\left\vert f\left( s_{1}\right) -f\left( s_{2}\right)
\right\vert ,
\end{equation*}%
and consequently%
\begin{equation*}
\left\vert \int\limits_{0}^{t}\int\limits_{B\left( 0,r\right) }\tau \left(
f\left( \left\Vert v_{m}\left( \tau \right) \right\Vert _{L^{p}\left(
\mathbb{R}
^{n}\right) }\right) \left\vert v_{m}\left( \tau ,x\right) \right\vert
^{p-2}v_{m}(\tau ,x)-f\left( \left\Vert v_{l}\left( \tau \right) \right\Vert
_{L^{p}\left(
\mathbb{R}
^{n}\right) }\right) \left\vert v_{l}(\tau ,x)\right\vert ^{p-2}v_{l}(\tau
,x)\right) \right.
\end{equation*}%
\begin{equation*}
\left. \times \left( v_{mt}\left( \tau ,x\right) -v_{lt}\left( \tau
,x\right) \right) dxd\tau \right\vert
\end{equation*}%
\begin{equation*}
\leq \left\vert \int\limits_{0}^{t}\int\limits_{B\left( 0,r\right) }\tau
\left( f_{\varepsilon }\left( \left\Vert v_{m}\left( \tau \right)
\right\Vert _{L^{p}\left(
\mathbb{R}
^{n}\right) }\right) \left\vert v_{m}\left( \tau ,x\right) \right\vert
^{p-2}v_{m}(\tau ,x)-f_{\varepsilon }\left( \left\Vert v_{l}\left( \tau
\right) \right\Vert _{L^{p}\left(
\mathbb{R}
^{n}\right) }\right) \left\vert v_{l}(\tau ,x)\right\vert ^{p-2}v_{l}(\tau
,x)\right) \right.
\end{equation*}%
\begin{equation}
\left. \times \left( v_{mt}\left( \tau ,x\right) -v_{lt}\left( \tau
,x\right) \right) dxd\tau \right\vert +c\text{ }t\max_{0\leq s_{1},s_{2}\leq
\varepsilon }\left\vert f\left( s_{1}\right) -f\left( s_{2}\right)
\right\vert ,\text{ \ }\forall t\geq 0\text{.}  \tag{3.1}
\end{equation}%
Let us estimate the first term on the right hand side of (3.1).%
\begin{equation*}
\int\limits_{0}^{t}\int\limits_{B\left( 0,r\right) }\tau \left(
f_{\varepsilon }\left( \left\Vert v_{m}\left( \tau \right) \right\Vert
_{L^{p}\left(
\mathbb{R}
^{n}\right) }\right) \left\vert v_{m}\left( \tau ,x\right) \right\vert
^{p-2}v_{m}(\tau ,x)-f_{\varepsilon }\left( \left\Vert v_{l}\left( \tau
\right) \right\Vert _{L^{p}\left(
\mathbb{R}
^{n}\right) }\right) \left\vert v_{l}(\tau ,x)\right\vert ^{p-2}v_{l}(\tau
,x)\right)
\end{equation*}%
\begin{equation*}
\times \left( v_{mt}\left( \tau ,x\right) -v_{lt}\left( \tau ,x\right)
\right) dxd\tau
\end{equation*}%
\begin{equation*}
=\int\limits_{0}^{t}\tau f_{\varepsilon }\left( \left\Vert v_{m}\left( \tau
\right) \right\Vert _{L^{p}\left(
\mathbb{R}
^{n}\right) }\right) \int\limits_{B\left( 0,r\right) }\left\vert v_{m}\left(
\tau ,x\right) \right\vert ^{p-2}v_{m}(\tau ,x)v_{mt}\left( \tau ,x\right)
dxd\tau
\end{equation*}%
\begin{equation*}
+\int\limits_{0}^{t}\tau f_{\varepsilon }\left( \left\Vert v_{l}\left( \tau
\right) \right\Vert _{L^{p}\left(
\mathbb{R}
^{n}\right) }\right) \int\limits_{B\left( 0,r\right) }\left\vert v_{l}(\tau
,x)\right\vert ^{p-2}v_{l}(\tau ,x)v_{lt}\left( \tau ,x\right) dxd\tau
\end{equation*}%
\begin{equation*}
-\int\limits_{0}^{t}\tau f_{\varepsilon }\left( \left\Vert v_{m}\left( \tau
\right) \right\Vert _{L^{p}\left(
\mathbb{R}
^{n}\right) }\right) \int\limits_{B\left( 0,r\right) }\left\vert v_{m}\left(
\tau ,x\right) \right\vert ^{p-2}v_{m}(\tau ,x)v_{lt}\left( \tau ,x\right)
dxd\tau
\end{equation*}%
\begin{equation}
-\int\limits_{0}^{t}\tau f_{\varepsilon }\left( \left\Vert v_{l}\left( \tau
\right) \right\Vert _{L^{p}\left(
\mathbb{R}
^{n}\right) }\right) \int\limits_{B\left( 0,r\right) \ }\left\vert
v_{l}(\tau ,x)\right\vert ^{p-2}v_{l}(\tau ,x)v_{mt}\left( \tau ,x\right)
dxd\tau .  \tag{3.2}
\end{equation}%
For the first two terms on the right hand side of (3.2), we have%
\begin{equation*}
\int\limits_{0}^{t}\tau f_{\varepsilon }\left( \left\Vert v_{m}\left( \tau
\right) \right\Vert _{L^{p}\left(
\mathbb{R}
^{n}\right) }\right) \int\limits_{B\left( 0,r\right) }\left\vert v_{m}\left(
\tau ,x\right) \right\vert ^{p-2}v_{m}(\tau ,x)v_{mt}\left( \tau ,x\right)
dxd\tau
\end{equation*}%
\begin{equation*}
+\int\limits_{0}^{t}\tau f_{\varepsilon }\left( \left\Vert v_{l}\left( \tau
\right) \right\Vert _{L^{p}\left(
\mathbb{R}
^{n}\right) }\right) \int\limits_{B\left( 0,r\right) }\left\vert v_{l}(\tau
,x)\right\vert ^{p-2}v_{l}(\tau ,x)v_{lt}\left( \tau ,x\right) dxd\tau
\end{equation*}%
\begin{equation*}
=\frac{1}{p}tf_{\varepsilon }\left( \left\Vert v_{m}\left( t\right)
\right\Vert _{L^{p}\left(
\mathbb{R}
^{n}\right) }\right) \left\Vert v_{m}\left( t\right) \right\Vert
_{L^{p}\left( B\left( 0,r\right) \right) }^{p}+\frac{1}{p}tf_{\varepsilon
}\left( \left\Vert v_{l}\left( t\right) \right\Vert _{L^{p}\left(
\mathbb{R}
^{n}\right) }\right) \left\Vert v_{l}\left( t\right) \right\Vert
_{L^{p}\left( B\left( 0,r\right) \right) }^{p}
\end{equation*}%
\begin{equation*}
-\frac{1}{p}\int\limits_{0}^{t}f_{\varepsilon }\left( \left\Vert v_{m}\left(
\tau \right) \right\Vert _{L^{p}\left(
\mathbb{R}
^{n}\right) }\right) \left\Vert v_{m}\left( \tau \right) \right\Vert
_{L^{p}\left( B\left( 0,r\right) \right) }^{p}d\tau -\frac{1}{p}%
\int\limits_{0}^{t}f_{\varepsilon }\left( \left\Vert v_{l}\left( \tau
\right) \right\Vert _{L^{p}\left(
\mathbb{R}
^{n}\right) }\right) \left\Vert v_{l}\left( \tau \right) \right\Vert
_{L^{p}\left( B\left( 0,r\right) \right) }^{p}d\tau
\end{equation*}%
\begin{equation*}
-\frac{1}{p}\int\limits_{0}^{t}\tau \frac{d}{dt}\left( f_{\varepsilon
}\left( \left\Vert v_{m}\left( \tau \right) \right\Vert _{L^{p}\left(
\mathbb{R}
^{n}\right) }\right) \right) \left\Vert v_{m}\left( \tau \right) \right\Vert
_{L^{p}\left( B\left( 0,r\right) \right) }^{p}d\tau
\end{equation*}%
\begin{equation*}
-\frac{1}{p}\int\limits_{0}^{t}\tau \frac{d}{dt}\left( f_{\varepsilon
}\left( \left\Vert v_{l}\left( \tau \right) \right\Vert _{L^{p}\left(
\mathbb{R}
^{n}\right) }\right) \right) \left\Vert v_{l}\left( \tau \right) \right\Vert
_{L^{p}\left( B\left( 0,r\right) \right) }^{p}d\tau .
\end{equation*}%
Since the sequence $\left\{ \left\Vert v_{m}\left( t\right) \right\Vert
_{L^{p}\left(
\mathbb{R}
^{n}\right) }\right\} _{m=1}^{\infty }$ is convergent, by continuity of $%
f_{\varepsilon },$ it follows that the sequence $\left\{ f_{\varepsilon
}\left( \left\Vert v_{m}\left( t\right) \right\Vert _{L^{p}\left(
\mathbb{R}
^{n}\right) }\right) \right\} _{m=1}^{\infty }$ also converges for all $t\in %
\left[ 0,\infty \right) $. Moreover, by the conditions of the lemma and the
definition of $f_{\varepsilon }$,\ we obtain that the sequence $\left\{
f_{\varepsilon }\left( \left\Vert v_{m}\left( .\right) \right\Vert
_{L^{p}\left(
\mathbb{R}
^{n}\right) }\right) \right\} _{m=1}^{\infty }$ is bounded in $W^{1,\infty
}\left( 0,\infty \right) $. So, the sequence $\left\{ f_{\varepsilon }\left(
\left\Vert v_{m}\left( .\right) \right\Vert _{L^{p}\left(
\mathbb{R}
^{n}\right) }\right) \right\} _{m=1}^{\infty }$ converges weakly star in $%
W^{1,\infty }\left( 0,\infty \right) $ and we have%
\begin{equation}
\left\{
\begin{array}{c}
f_{\varepsilon }\left( \left\Vert v_{m}\left( .\right) \right\Vert
_{L^{p}\left(
\mathbb{R}
^{n}\right) }\right) \rightarrow Q\text{ \ \ weakly star in \ }W^{1,\infty
}\left( 0,\infty \right) , \\
v_{m}\rightarrow v\text{ \ \ weakly star in }L^{\infty }\left( 0,\infty
;H^{2}\left(
\mathbb{R}
^{n}\right) \right) , \\
v_{mt}\rightarrow v_{t}\text{ \ \ weakly star in \ \ }L^{\infty }\left(
0,\infty ;L^{2}\left(
\mathbb{R}
^{n}\right) \right) ,%
\end{array}%
\right.  \tag{3.3}
\end{equation}%
for some $Q\in W^{1,\infty }\left( 0,\infty \right) $ and $v\in L^{\infty
}\left( 0,\infty ;H^{2}\left(
\mathbb{R}
^{n}\right) \right) \cap W^{1,\infty }\left( 0,\infty ;L^{2}\left(
\mathbb{R}
^{n}\right) \right) $. Applying Aubin--Lions--Simon lemma (see [20]), by
(3.3)$_{2}$-(3.3)$_{3}$, we find%
\begin{equation}
v_{m}\rightarrow v\text{ strongly in }C\left( \left[ 0,T\right] ;L^{q}\left(
B\left( 0,r\right) \right) \right) ,\text{ \ }\forall T\geq 0,  \tag{3.4}
\end{equation}%
where $q<\frac{2n}{\left( n-4\right) ^{+}}$. Then, considering (3.3) and
(3.4), we get%
\begin{equation*}
\lim_{m\rightarrow \infty }\int\limits_{0}^{t}\tau f_{\varepsilon }\left(
\left\Vert v_{m}\left( \tau \right) \right\Vert _{L^{p}\left(
\mathbb{R}
^{n}\right) }\right) \int\limits_{B\left( 0,r\right) }\left\vert v_{m}\left(
\tau ,x\right) \right\vert ^{p-2}v_{m}(\tau ,x)v_{mt}\left( \tau ,x\right)
dxd\tau
\end{equation*}%
\begin{equation*}
+\lim_{l\rightarrow \infty }\int\limits_{0}^{t}\tau f_{\varepsilon }\left(
\left\Vert v_{l}\left( \tau \right) \right\Vert _{L^{p}\left(
\mathbb{R}
^{n}\right) }\right) \int\limits_{B\left( 0,r\right) }\left\vert v_{l}(\tau
,x)\right\vert ^{p-2}v_{l}(\tau ,x)v_{lt}\left( \tau ,x\right) dxd\tau
\end{equation*}%
\begin{equation*}
=\frac{2}{p}tQ\left( t\right) \left\Vert v\left( t\right) \right\Vert
_{L^{p}\left( B\left( 0,r\right) \right) }^{p}-\frac{2}{p}%
\int\limits_{0}^{t}Q\left( \tau \right) \left\Vert v\left( \tau \right)
\right\Vert _{L^{p}\left( B\left( 0,r\right) \right) }^{p}d\tau
\end{equation*}%
\begin{equation}
-\frac{2}{p}\int\limits_{0}^{t}\tau \frac{d}{dt}\left( Q\left( \tau \right)
\right) \left\Vert v\left( \tau \right) \right\Vert _{L^{p}\left( B\left(
0,r\right) \right) }^{p}d\tau \text{.}  \tag{3.5}
\end{equation}%
For the last two terms on the right hand side of (3.2), by using (3.3), we
have%
\begin{equation*}
\lim_{m\rightarrow \infty }\lim_{l\rightarrow \infty
}\int\limits_{0}^{t}\tau f_{\varepsilon }\left( \left\Vert v_{m}\left( \tau
\right) \right\Vert _{L^{p}\left(
\mathbb{R}
^{n}\right) }\right) \int\limits_{B\left( 0,r\right) }\left\vert v_{m}\left(
\tau ,x\right) \right\vert ^{p-2}v_{m}(\tau ,x)v_{lt}\left( \tau ,x\right)
dxd\tau
\end{equation*}%
\begin{equation*}
+\lim_{m\rightarrow \infty }\lim_{l\rightarrow \infty
}\int\limits_{0}^{t}\tau f_{\varepsilon }\left( \left\Vert v_{l}\left( \tau
\right) \right\Vert _{L^{p}\left(
\mathbb{R}
^{n}\right) }\right) \int\limits_{B\left( 0,r\right) \ }\left\vert
v_{l}(\tau ,x)\right\vert ^{p-2}v_{l}(\tau ,x)v_{mt}\left( \tau ,x\right)
dxd\tau
\end{equation*}%
\begin{equation*}
=2\int\limits_{0}^{t}\tau Q\left( \tau \right) \int\limits_{B\left(
0,r\right) \ }\left\vert v(\tau ,x)\right\vert ^{p-2}v(\tau ,x)v_{t}\left(
\tau ,x\right) dxd\tau
\end{equation*}%
\begin{equation*}
=\frac{2}{p}tQ\left( t\right) \left\Vert v\left( t\right) \right\Vert
_{L^{p}\left( B\left( 0,r\right) \right) }^{p}-\frac{2}{p}%
\int\limits_{0}^{t}Q\left( \tau \right) \left\Vert v\left( \tau \right)
\right\Vert _{L^{p}\left( B\left( 0,r\right) \right) }^{p}d\tau
\end{equation*}%
\begin{equation}
-\frac{2}{p}\int\limits_{0}^{t}\tau \frac{d}{dt}\left( Q\left( \tau \right)
\right) \left\Vert v\left( \tau \right) \right\Vert _{L^{p}\left( B\left(
0,r\right) \right) }^{p}d\tau \text{.}  \tag{3.6}
\end{equation}%
Hence, taking into account (3.5)-(3.6) and passing to limit in (3.2), we
obtain%
\begin{equation*}
\underset{m\rightarrow \infty }{\lim }\underset{l\rightarrow \infty }{\lim }%
\int\limits_{0}^{t}\int\limits_{B\left( 0,r\right) }\tau f_{\varepsilon
}\left( \left\Vert v_{m}\left( \tau \right) \right\Vert _{L^{p}\left(
\mathbb{R}
^{n}\right) }\right) \left\vert v_{m}\left( \tau ,x\right) \right\vert
^{p-2}v_{m}(\tau ,x)
\end{equation*}%
\begin{equation}
-f_{\varepsilon }\left( \left\Vert v_{l}\left( \tau \right) \right\Vert
_{L^{p}\left(
\mathbb{R}
^{n}\right) }\right) \left\vert v_{l}(\tau ,x)\right\vert ^{p-2}v_{l}(\tau
,x)\left( v_{mt}\left( \tau ,x\right) -v_{lt}\left( \tau ,x\right) \right)
dxd\tau =0\text{.}  \tag{3.7}
\end{equation}%
Then, by (3.1) and (3.7), for all $r>0$, we have
\begin{equation*}
\underset{m\rightarrow \infty }{\lim \sup }\text{ }\underset{l\rightarrow
\infty }{\lim \sup }\left\vert \int\limits_{0}^{t}\int\limits_{B\left(
0,r\right) }\tau f\left( \left\Vert v_{m}\left( \tau \right) \right\Vert
_{L^{p}\left(
\mathbb{R}
^{n}\right) }\right) \left\vert v_{m}\left( \tau ,x\right) \right\vert
^{p-2}v_{m}(\tau ,x)\right.
\end{equation*}%
\begin{equation*}
\left. -f\left( \left\Vert v_{l}\left( \tau \right) \right\Vert
_{L^{p}\left(
\mathbb{R}
^{n}\right) }\right) \left\vert v_{l}(\tau ,x)\right\vert ^{p-2}v_{l}(\tau
,x)\left( v_{mt}\left( \tau ,x\right) -v_{lt}\left( \tau ,x\right) \right)
dxd\tau \right\vert
\end{equation*}%
\begin{equation*}
\leq c\text{ }t\max_{0\leq s_{1},s_{2}\leq \varepsilon }\left\vert f\left(
s_{1}\right) -f\left( s_{2}\right) \right\vert ,\text{ }\forall t\geq 0\text{%
.}
\end{equation*}%
Thus, passing to the limit in the above inequality as $\varepsilon
\rightarrow 0$, we obtain the claim of the lemma.
\end{proof}

\begin{lemma}
Assume that in addition to the conditions of Lemma 3.1, conditions (2.3) and
(2.4) also hold. Then, for every $\gamma >0$ there exists $c_{\gamma }>0$
such that%
\begin{equation*}
\int\limits_{0}^{t}\int\limits_{%
\mathbb{R}
^{n}}\tau \left( f\left( \left\Vert v_{l}\left( \tau \right) \right\Vert
_{L^{p}\left(
\mathbb{R}
^{n}\right) }\right) \left\vert v_{l}(\tau ,x)\right\vert ^{p-2}v_{l}(\tau
,x)-f\left( \left\Vert v_{m}\left( \tau \right) \right\Vert _{L^{p}\left(
\mathbb{R}
^{n}\right) }\right) \left\vert v_{m}\left( \tau ,x\right) \right\vert
^{p-2}v_{m}(\tau ,x)\right)
\end{equation*}%
\begin{equation*}
\times \left( v_{mt}\left( \tau ,x\right) -v_{lt}\left( \tau ,x\right)
\right) dxd\tau
\end{equation*}%
\begin{equation*}
\leq \gamma \int\limits_{0}^{t}\tau E_{%
\mathbb{R}
^{n}\backslash \left( B\left( 0,r\right) \right) }\left( v_{m}\left( \tau
\right) -v_{l}\left( \tau \right) \right) d\tau +c_{\gamma
}\int\limits_{0}^{t}E_{%
\mathbb{R}
^{n}\backslash \left( B\left( 0,r\right) \right) }\left( v_{m}\left( \tau
\right) -v_{l}\left( \tau \right) \right) d\tau
\end{equation*}%
\begin{equation*}
+c_{\gamma }\int\limits_{0}^{t}\tau \left( \left\Vert \sqrt{a}v_{mt}\left(
\tau \right) \right\Vert _{L^{2}\left(
\mathbb{R}
^{n}\right) }^{2}+\left\Vert \sqrt{a}v_{lt}\left( \tau \right) \right\Vert
_{L^{2}\left(
\mathbb{R}
^{n}\right) }^{2}\right) E_{%
\mathbb{R}
^{n}\backslash B\left( 0,r\right) }\left( v_{m}\left( \tau \right)
-v_{l}\left( \tau \right) \right) d\tau
\end{equation*}%
\begin{equation*}
+K_{r}^{m,l}(t)\text{, }\forall t\geq 0\text{, }\forall r\geq r_{0},
\end{equation*}%
where $K_{r}^{m,l}\in C\left[ 0,t\right] $, $\underset{m,l}{\sup }\left\Vert
K_{r}^{m,l}\right\Vert _{C\left[ 0,t\right] }$ $<\infty $ \ and $\underset{%
m\rightarrow \infty }{\lim }$ $\underset{l\rightarrow \infty }{\lim \sup }%
\left\vert K_{r}^{m,l}\left( t\right) \right\vert =0$, for all $t$ $\geq 0$.
\end{lemma}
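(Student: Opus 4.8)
The plan is to split the spatial integral as $\int_{\mathbb R^n}=\int_{B(0,r)}+\int_{\Omega_r}$, with $\Omega_r:=\mathbb R^n\setminus B(0,r)$, and to treat the two pieces by different mechanisms. On $B(0,r)$ the integrand is, up to an overall sign and the harmless interchange $m\leftrightarrow l$, exactly the one in Lemma~3.1; since $\{v_m\}$ is bounded in $L^\infty(0,\infty;H^2(\mathbb R^n))$, $\{v_{mt}\}$ in $L^\infty(0,\infty;L^2(\mathbb R^n))$, and the growth restriction $p(n-4)\le 2n-4$ yields $H^2(\mathbb R^n)\hookrightarrow L^{2(p-1)}(\mathbb R^n)$, the $B(0,r)$-integral is uniformly bounded on $[0,t]$ and, by Lemma~3.1, has vanishing $\lim_{m\to\infty}\limsup_{l\to\infty}$. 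Hence it is one of the pieces of $K_r^{m,l}$.

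On $\Omega_r$ the key is that $r\ge r_0$, so $\alpha\ge\alpha_0>0$ there by (2.4), which gives $|v_{mt}-v_{lt}|\le\alpha_0^{-1/2}(\sqrt\alpha\,|v_{mt}|+\sqrt\alpha\,|v_{lt}|)$ a.e.\ on $\Omega_r$; this is how the weighted norms enter. I then split the difference of the nonlinearities as
$$f(\|v_l\|_{L^p})|v_l|^{p-2}v_l-f(\|v_m\|_{L^p})|v_m|^{p-2}v_m=f(\|v_l\|_{L^p})\bigl(|v_l|^{p-2}v_l-|v_m|^{p-2}v_m\bigr)+\bigl(f(\|v_l\|_{L^p})-f(\|v_m\|_{L^p})\bigr)|v_m|^{p-2}v_m.$$
The $\Omega_r$-part of the second summand is, after the damping inequality, H\"older and $H^2\hookrightarrow L^{2(p-1)}$, bounded by $c\int_0^t\tau\,|f(\|v_l(\tau)\|_{L^p})-f(\|v_m(\tau)\|_{L^p})|(\|\sqrt\alpha\,v_{mt}\|_{L^2}+\|\sqrt\alpha\,v_{lt}\|_{L^2})\,d\tau$; since $\{\|v_m(\tau)\|_{L^p}\}$ converges for every $\tau$ and $f\in C^1$, this is uniformly bounded and has vanishing $\lim_m\limsup_l$ by dominated convergence on $[0,t]$, so it too joins $K_r^{m,l}$.

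The remaining, main term is $\mathcal M:=\int_0^t\int_{\Omega_r}\tau f(\|v_l\|_{L^p})(|v_l|^{p-2}v_l-|v_m|^{p-2}v_m)(v_{mt}-v_{lt})\,dx\,d\tau$. Writing $w:=v_m-v_l$ and $|v_m|^{p-2}v_m-|v_l|^{p-2}v_l=(p-1)\beta w$ with $\beta:=\int_0^1|(1-s)v_l+sv_m|^{p-2}\,ds\ge0$, the integrand equals $-\tfrac{p-1}{2}\tau f(\|v_l\|_{L^p})\beta\,\partial_t(w^2)$, so integration by parts in $\tau$ gives
$$\mathcal M=-\frac{p-1}{2}\int_{\Omega_r}t\,f(\|v_l(t)\|_{L^p})\beta(t)w(t)^2\,dx+\frac{p-1}{2}\int_0^t\int_{\Omega_r}\Bigl(f(\|v_l\|_{L^p})\beta+\tau\,\partial_\tau\bigl(f(\|v_l\|_{L^p})\bigr)\beta+\tau f(\|v_l\|_{L^p})\,\partial_\tau\beta\Bigr)w^2\,dx\,d\tau.$$
By (2.5) and $\beta\ge0$ the boundary term is $\le0$ and is discarded. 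Using $\beta\le c(|v_m|+|v_l|)^{p-2}$ and $|\partial_\tau\beta|\le c(|v_m|+|v_l|)^{p-3}(|v_{mt}|+|v_{lt}|)$ (interpreted through the difference quotient of $\beta$ and the elementary estimates for $|\cdot|^{p-2}$ when $p<4$), H\"older, the embeddings $H^2\hookrightarrow L^p,L^{2(p-1)}$, the damping inequality on $\Omega_r$, and the elementary bound $ab\le\tfrac\delta2 b+\tfrac1{2\delta}a^2b$ with $b=E_{\Omega_r}(w)$ and $\delta$ chosen so that the coefficient of $\int_0^t\tau E_{\Omega_r}(w)\,d\tau$ becomes $\gamma$: the first volume term produces $c_\gamma\int_0^tE_{\Omega_r}(w)\,d\tau$, and the third produces $\gamma\int_0^t\tau E_{\Omega_r}(w)\,d\tau$ together with $c_\gamma\int_0^t\tau(\|\sqrt\alpha\,v_{mt}\|_{L^2}^2+\|\sqrt\alpha\,v_{lt}\|_{L^2}^2)E_{\Omega_r}(w)\,d\tau$.

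The hard part is the second volume term $\tfrac{p-1}{2}\int_0^t\int_{\Omega_r}\tau\,\partial_\tau\bigl(f(\|v_l(\tau)\|_{L^p})\bigr)\beta w^2\,dx\,d\tau$: the time-derivative of the $L^p$-norm of $v_l$ is delicate, and $\partial_\tau f(\|v_l\|_{L^p})$ need not be bounded when $\|v_l\|_{L^p}\to0$. As in Lemma~3.1 I would replace $f$ by its truncation $f_\varepsilon$, for which $f_\varepsilon(\|v_m(\cdot)\|_{L^p})$ is bounded in, and converges weak-star in, $W^{1,\infty}(0,\infty)$ (cf.\ (3.3)); using $\partial_\tau f_\varepsilon(\|v_l\|_{L^p})=f_{\varepsilon}'(\|v_l\|_{L^p})\|v_l\|_{L^p}^{1-p}\int_{\mathbb R^n}|v_l|^{p-2}v_l v_{lt}\,dx$ and splitting $\int_{\mathbb R^n}=\int_{B(0,r)}+\int_{\Omega_r}$, the $\Omega_r$-part feeds, after the damping inequality and a Young inequality, the three admissible types, while the $B(0,r)$-part leaves a remainder which, together with the truncation error $c\,t\max_{0\le s_1,s_2\le\varepsilon}|f(s_1)-f(s_2)|$, should be routed into $K_r^{m,l}$ by combining the weak-star convergence (3.3) with the strong convergence (3.4), after which letting $\varepsilon\to0$ removes the last error. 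I expect the genuine obstacle to lie exactly here — in making this last step rigorous while keeping the $\varepsilon$-dependent constants inside $K_r^{m,l}$, so that the passage $\varepsilon\to0$ is legitimate. A minor technical point throughout is that, to obtain precisely $E_{\mathbb R^n\setminus B(0,r)}(w)$ on the right rather than a full $H^2(\mathbb R^n)$-norm, one carries out the estimates for $\varphi_r w$ with $\varphi_r$ a smooth cut-off equal to $1$ on $\Omega_{2r}$ and $0$ on $B(0,r)$, the commutator terms being lower order and supported in a bounded annulus, hence again absorbable into $K_r^{m,l}$.
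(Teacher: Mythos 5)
Your decomposition (interior ball handled by Lemma 3.1, exterior handled by the mean-value representation $|v_l|^{p-2}v_l-|v_m|^{p-2}v_m=(p-1)\beta\,(v_l-v_m)$ plus integration by parts in $\tau$, with $f$ and $|\cdot|^{p-2}$ truncated) is exactly the paper's route, and the terms you label as admissible or as $K_r^{m,l}$-remainders are handled correctly up to the point you yourself flag. But the step you defer is a genuine gap, and the mechanism you propose for it cannot work. The problematic term is
\begin{equation*}
\int_0^t \tau\,\frac{d}{d\tau}\Bigl(f_\varepsilon\bigl(\Vert v_l(\tau)\Vert_{L^p}\bigr)\Bigr)\int_{\Omega_r}\beta\,w^2\,dx\,d\tau ,\qquad
\frac{d}{d\tau}f_\varepsilon\bigl(\Vert v_l\Vert_{L^p}\bigr)=\frac{f_\varepsilon'\bigl(\Vert v_l\Vert_{L^p}\bigr)}{\Vert v_l\Vert_{L^p}^{p-1}}\int_{\mathbb R^n}|v_l|^{p-2}v_l\,v_{lt}\,dx ,
\end{equation*}
and the spatial integral defining $\frac{d}{d\tau}f_\varepsilon$ runs over all of $\mathbb R^n$, in particular over $B(0,r_0)$, where (2.4) gives no lower bound on $\alpha$ and hence your damping inequality is unavailable. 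You propose to route the $B(0,r_0)$-contribution into $K_r^{m,l}$ "by combining (3.3) with (3.4)"; this fails because that contribution is multiplied by $\int_{\Omega_r}\beta w^2\,dx\sim E_{\Omega_r}(v_m-v_l)$, which is precisely the unknown quantity being estimated and does not tend to zero in the double limit, while the factor $\int_{B(0,r_0)}|v_l|^{p-2}v_l v_{lt}\,dx$ converges to a generically nonzero limit. A product of a bounded nonvanishing quantity with $E_{\Omega_r}(v_m-v_l)$ is neither a $K$-term nor of any of the three admissible forms on the right-hand side of the lemma (those involve only $\Vert\sqrt{a}\,v_{mt}\Vert_{L^2(\mathbb R^n)}^2$, not $\Vert v_{lt}\Vert$ over the ball).

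What actually closes this is the paper's pair of devices that your write-up omits. First, one replaces $\Vert v_m\Vert_{L^p}$ by $\Vert\varphi_M(v_m)\Vert_{L^p}$ (truncation at height $M$, with error $O(M^{-\beta})$ by (3.11)), so that the $B(0,r_0)$-part of the derivative is bounded by $M^{p-1}\Vert v_{lt}\Vert_{L^1(B(0,r_0))}$ rather than by a quantity requiring an $L^2$ bound of $v_{lt}$ on the ball. Second, and crucially, the $L^1$ norm is estimated by writing $1=\frac{a(x)+\lambda}{a(x)+\lambda}$, which yields
\begin{equation*}
\Vert v_{lt}(\tau)\Vert_{L^1(B(0,r_0))}^2\le \frac{c}{\lambda^2}\int_{\mathbb R^n}a(x)|v_{lt}(\tau,x)|^2dx+c\int_{B(0,r_0)}\Bigl(\frac{\lambda}{a(x)+\lambda}\Bigr)^2dx ,
\end{equation*}
where the second integral tends to $0$ as $\lambda\to0^+$ by dominated convergence, using only the a.e. positivity of $\alpha$ from (2.3). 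The first piece is absorbed into the $c_\gamma\int_0^t\tau(\Vert\sqrt{a}v_{mt}\Vert^2+\Vert\sqrt{a}v_{lt}\Vert^2)E_{\Omega_r}\,d\tau$ term, and the smallness of the second piece (together with $\mu$, $M$, $\varepsilon$) is what lets the total coefficient of $\int_0^t\tau E_{\Omega_r}\,d\tau$ be driven below $\gamma$. This $\lambda$-regularization of $1/\alpha$ is the one idea the lemma really turns on — it is where the weak hypothesis $\alpha>0$ a.e. (as opposed to $\alpha\ge\alpha_0$ everywhere) is exploited — and without it your argument does not go through.
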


\begin{proof}
Firstly, we have%
\begin{equation*}
\int\limits_{0}^{t}\int\limits_{%
\mathbb{R}
^{n}}\tau \left( f\left( \left\Vert v_{l}\left( \tau \right) \right\Vert
_{L^{p}\left(
\mathbb{R}
^{n}\right) }\right) \left\vert v_{l}(\tau ,x)\right\vert ^{p-2}v_{l}(\tau
,x)-f\left( \left\Vert v_{m}\left( \tau \right) \right\Vert _{L^{p}\left(
\mathbb{R}
^{n}\right) }\right) \left\vert v_{m}\left( \tau ,x\right) \right\vert
^{p-2}v_{m}(\tau ,x)\right)
\end{equation*}%
\begin{equation*}
\times \left( v_{mt}\left( \tau ,x\right) -v_{lt}\left( \tau ,x\right)
\right) dxd\tau
\end{equation*}%
\begin{equation*}
=\int\limits_{0}^{t}\int\limits_{%
\mathbb{R}
^{n}\backslash B\left( 0,r\right) }\tau \left( f\left( \left\Vert
v_{l}\left( \tau \right) \right\Vert _{L^{p}\left(
\mathbb{R}
^{n}\right) }\right) \left\vert v_{l}(\tau ,x)\right\vert ^{p-2}v_{l}(\tau
,x)-f\left( \left\Vert v_{m}\left( \tau \right) \right\Vert _{L^{p}\left(
\mathbb{R}
^{n}\right) }\right) \left\vert v_{m}\left( \tau ,x\right) \right\vert
^{p-2}v_{m}(\tau ,x)\right)
\end{equation*}%
\begin{equation}
\times \left( v_{mt}\left( \tau ,x\right) -v_{lt}\left( \tau ,x\right)
\right) dxd\tau +K_{1,r}^{m,l}(t)\text{, }\forall r>0,  \tag{3.8}
\end{equation}%
where%
\begin{equation*}
K_{1,r}^{m,l}(t):=\int\limits_{0}^{t}\int\limits_{B\left( 0,r\right) }\tau
\left( f\left( \left\Vert v_{l}\left( \tau \right) \right\Vert _{L^{p}\left(
\mathbb{R}
^{n}\right) }\right) \left\vert v_{l}(\tau ,x)\right\vert ^{p-2}v_{l}(\tau
,x)\right.
\end{equation*}%
\begin{equation*}
\left. -f\left( \left\Vert v_{m}\left( \tau \right) \right\Vert
_{L^{p}\left(
\mathbb{R}
^{n}\right) }\right) \left\vert v_{m}\left( \tau ,x\right) \right\vert
^{p-2}v_{m}(\tau ,x)\right) \left( v_{mt}\left( \tau ,x\right) -v_{lt}\left(
\tau ,x\right) \right) dxd\tau \text{,}
\end{equation*}%
and by Lemma 3.1, it follows that%
\begin{equation*}
\underset{m,l}{\sup }\left\Vert K_{1,r}^{m,l}\right\Vert _{C\left[ 0,t\right]
}<\infty \text{ \ and }\underset{m\rightarrow \infty }{\text{ }\lim \text{ }}%
\underset{l\rightarrow \infty }{\lim \sup }\left\vert
K_{1,r}^{m,l}(t)\right\vert =0,\text{ }\forall t\geq 0.
\end{equation*}%
On the other hand, for the first term on the right hand side of (3.8), we get%
\begin{equation*}
\int\limits_{0}^{t}\int\limits_{%
\mathbb{R}
^{n}\backslash B\left( 0,r\right) }\tau \left( f\left( \left\Vert
v_{l}\left( \tau \right) \right\Vert _{L^{p}\left(
\mathbb{R}
^{n}\right) }\right) \left\vert v_{l}(\tau ,x)\right\vert ^{p-2}v_{l}(\tau
,x)-f\left( \left\Vert v_{m}\left( \tau \right) \right\Vert _{L^{p}\left(
\mathbb{R}
^{n}\right) }\right) \left\vert v_{m}\left( \tau ,x\right) \right\vert
^{p-2}v_{m}(\tau ,x)\right)
\end{equation*}%
\begin{equation*}
\times \left( v_{mt}\left( \tau ,x\right) -v_{lt}\left( \tau ,x\right)
\right) dxd\tau
\end{equation*}%
\begin{equation*}
=-\frac{\left( p-1\right) }{2}\int\limits_{0}^{t}\tau f\left( \left\Vert
v_{m}\left( \tau \right) \right\Vert _{L^{p}\left(
\mathbb{R}
^{n}\right) }\right) \int\limits_{%
\mathbb{R}
^{n}\backslash B\left( 0,r\right) }\int_{0}^{1}\left\vert v_{m}\left( \tau
,x\right) +\sigma \left( v_{l}\left( \tau ,x\right) -v_{m}\left( \tau
,x\right) \right) \right\vert ^{p-2}d\sigma
\end{equation*}%
\begin{equation}
\times \frac{d}{d\tau }\left\vert v_{m}\left( \tau ,x\right) -v_{l}\left(
\tau ,x\right) \right\vert ^{2}dxd\tau +K_{2,r}^{m,l}\left( t\right) ,
\tag{3.9}
\end{equation}%
where%
\begin{equation*}
K_{2,r}^{m,l}(t):=\int\limits_{0}^{t}\tau \left( f\left( \left\Vert
v_{l}\left( \tau \right) \right\Vert _{L^{p}\left(
\mathbb{R}
^{n}\right) }\right) -f\left( \left\Vert v_{m}\left( \tau \right)
\right\Vert _{L^{p}\left(
\mathbb{R}
^{n}\right) }\right) \right) \int\limits_{%
\mathbb{R}
^{n}\backslash B\left( 0,r\right) }\left\vert v_{l}(\tau ,x)\right\vert
^{p-2}v_{l}(\tau ,x)
\end{equation*}%
\begin{equation*}
\times \left( v_{mt}\left( \tau ,x\right) -v_{lt}\left( \tau ,x\right)
\right) dxd\tau \text{.}
\end{equation*}%
By the conditions of the lemma, we find
\begin{equation*}
\underset{m,l}{\sup }\left\Vert K_{2,r}^{m,l}\right\Vert _{C\left[ 0,t\right]
}<\infty \text{ \ and}\underset{m\rightarrow \infty }{\text{ }\lim \text{ }}%
\underset{l\rightarrow \infty }{\lim \sup }\left\vert K_{2,r}^{m,l}\left(
t\right) \right\vert =0\text{, }\forall t\geq 0\text{.}
\end{equation*}%
So, denoting $K_{r}^{m,l}\left( t\right) :=K_{1,r}^{m,l}\left( t\right)
+K_{2,r}^{m,l}\left( t\right) $, by (3.8) and (3.9), we have%
\begin{equation*}
\int\limits_{0}^{t}\int\limits_{%
\mathbb{R}
^{n}}\tau \left( f\left( \left\Vert v_{l}\left( \tau \right) \right\Vert
_{L^{p}\left(
\mathbb{R}
^{n}\right) }\right) \left\vert v_{l}(\tau ,x)\right\vert ^{p-2}v_{l}(\tau
,x)-f\left( \left\Vert v_{m}\left( \tau \right) \right\Vert _{L^{p}\left(
\mathbb{R}
^{n}\right) }\right) \left\vert v_{m}\left( \tau ,x\right) \right\vert
^{p-2}v_{m}(\tau ,x)\right)
\end{equation*}%
\begin{equation*}
\times \left( v_{mt}\left( \tau ,x\right) -v_{lt}\left( \tau ,x\right)
\right) dxd\tau
\end{equation*}%
\begin{equation*}
=-\frac{\left( p-1\right) }{2}\int\limits_{0}^{t}\tau f\left( \left\Vert
v_{m}\left( \tau \right) \right\Vert _{L^{p}\left(
\mathbb{R}
^{n}\right) }\right) \int\limits_{%
\mathbb{R}
^{n}\backslash B\left( 0,r\right) }\int_{0}^{1}\left\vert v_{m}\left( \tau
,x\right) +\sigma \left( v_{l}\left( \tau ,x\right) -v_{m}\left( \tau
,x\right) \right) \right\vert ^{p-2}d\sigma
\end{equation*}%
\begin{equation}
\times \frac{d}{d\tau }\left\vert v_{m}\left( \tau ,x\right) -v_{l}\left(
\tau ,x\right) \right\vert ^{2}dxd\tau +K_{r}^{m,l}\left( t\right) .
\tag{3.10}
\end{equation}

Now, let us estimate the first term on the right hand side of (3.10). Denote
$\varphi _{M}\left( u\right) =$\newline
$\left\{
\begin{array}{c}
u,\text{ }\left\vert u\right\vert \leq M, \\
M,\text{ }\left\vert u\right\vert >M,%
\end{array}%
\right. $ and $\Psi _{\varepsilon }\left( u\right) =\left\{
\begin{array}{c}
\varepsilon ^{p-2},\text{ }\left\vert u\right\vert \leq \varepsilon , \\
\left\vert u\right\vert ^{p-2},\text{ }\left\vert u\right\vert >\varepsilon%
\end{array}%
\right. $. Then, we get%
\begin{equation*}
\left\vert \left\Vert v_{m}\left( \tau \right) \right\Vert _{L^{p}\left(
\mathbb{R}
^{n}\right) }-\left\Vert \varphi _{M}\left( v_{m}\left( \tau \right) \right)
\right\Vert _{L^{p}\left(
\mathbb{R}
^{n}\right) }\right\vert \leq 2\left( \int\limits_{\left\{ x\in
\mathbb{R}
^{n}:\left\vert v_{m}\left( \tau ,x\right) \right\vert >M\right\}
}\left\vert v_{m}\left( \tau ,x\right) \right\vert ^{^{p}}dx\right) ^{\frac{1%
}{p}}
\end{equation*}%
\begin{equation}
\leq \frac{2}{M^{\beta }}\left( \int\limits_{\left\{ x\in
\mathbb{R}
^{n}:\left\vert v_{m}\left( \tau ,x\right) \right\vert >M\right\}
}\left\vert v_{m}\left( \tau ,x\right) \right\vert ^{^{p+\beta }}dx\right) ^{%
\frac{1}{p}}\leq \frac{2}{M^{\beta }}\left\Vert v\left( \tau \right)
\right\Vert _{H^{2}\left(
\mathbb{R}
^{n}\right) }^{\frac{p+\beta }{p}},  \tag{3.11}
\end{equation}%
where $\beta \in \left( 0,\frac{2n}{\left( n-4\right) ^{+}}-p\right) $.
Also, it is clear that%
\begin{equation}
\left\vert \left\vert w\right\vert ^{p-2}-\Psi _{\varepsilon }\left(
w\right) \right\vert \leq \omega \left( \varepsilon \right) \text{,}
\tag{3.12}
\end{equation}%
where $\omega \left( \varepsilon \right) =\left\{
\begin{array}{c}
\varepsilon ^{p-2}\text{, \ \ }p>2, \\
0\text{, \ \ \ \ }p=2%
\end{array}%
\right. $.

By (3.11) and (3.12), it is easy to see that%
\begin{equation*}
\int\limits_{0}^{t}\tau f\left( \left\Vert v_{m}\left( \tau \right)
\right\Vert _{L^{p}\left(
\mathbb{R}
^{n}\right) }\right) \int\limits_{%
\mathbb{R}
^{n}\backslash B\left( 0,r\right) }\int_{0}^{1}\left\vert v_{m}\left( \tau
,x\right) +\sigma \left( v_{l}\left( \tau ,x\right) -v_{m}\left( \tau
,x\right) \right) \right\vert ^{p-2}d\sigma
\end{equation*}%
\begin{equation*}
\times \frac{d}{d\tau }\left\vert v_{m}\left( \tau ,x\right) -v_{l}\left(
\tau ,x\right) \right\vert ^{2}dxd\tau
\end{equation*}%
\begin{equation*}
\geq \int\limits_{0}^{t}\tau f_{\varepsilon }\left( \left\Vert \varphi
_{M}\left( v_{m}\left( \tau \right) \right) \right\Vert _{L^{p}\left(
\mathbb{R}
^{n}\right) }\right) \int\limits_{%
\mathbb{R}
^{n}\backslash B\left( 0,r\right) }\int_{0}^{1}\Psi _{\varepsilon }\left(
v_{m}\left( \tau ,x\right) +\sigma \left( v_{l}\left( \tau ,x\right)
-v_{m}\left( \tau ,x\right) \right) \right) d\sigma
\end{equation*}%
\begin{equation*}
\times \frac{d}{d\tau }\left\vert v_{m}\left( \tau ,x\right) -v_{l}\left(
\tau ,x\right) \right\vert ^{2}dxd\tau
\end{equation*}%
\begin{equation}
-c_{1}\left( \max_{0<s_{1},s_{2}<\varepsilon }\left\vert f\left(
s_{1}\right) -f\left( s_{2}\right) \right\vert +\frac{1}{M^{\beta }}+\omega
\left( \varepsilon \right) \right) \int\limits_{0}^{t}\tau E_{%
\mathbb{R}
^{n}\backslash B\left( 0,r\right) }\left( v_{m}\left( \tau \right)
-v_{l}\left( \tau \right) \right) d\tau ,  \tag{3.13}
\end{equation}%
where $f_{\varepsilon }\left( \cdot \right) $ is as in Lemma 3.1.

Now, let us estimate the first term on the right hand side of (3.13).%
\begin{equation*}
\int\limits_{0}^{t}\tau f_{\varepsilon }\left( \left\Vert \varphi _{M}\left(
v_{m}\left( \tau \right) \right) \right\Vert _{L^{p}\left(
\mathbb{R}
^{n}\right) }\right) \int\limits_{%
\mathbb{R}
^{n}\backslash B\left( 0,r\right) }\int_{0}^{1}\Psi _{\varepsilon }\left(
v_{m}\left( \tau ,x\right) +\sigma \left( v_{l}\left( \tau ,x\right)
-v_{m}\left( \tau ,x\right) \right) \right) d\sigma
\end{equation*}%
\begin{equation*}
\times \frac{d}{d\tau }\left\vert v_{m}\left( \tau ,x\right) -v_{l}\left(
\tau ,x\right) \right\vert ^{2}dxd\tau
\end{equation*}%
\begin{equation*}
\geq -\int\limits_{0}^{t}f_{\varepsilon }\left( \left\Vert \varphi
_{M}\left( v_{m}\left( \tau \right) \right) \right\Vert _{L^{p}\left(
\mathbb{R}
^{n}\right) }\right) \int\limits_{%
\mathbb{R}
^{n}\backslash B\left( 0,r\right) }\int_{0}^{1}\Psi _{\varepsilon }\left(
v_{m}\left( \tau ,x\right) +\sigma \left( v_{l}\left( \tau ,x\right)
-v_{m}\left( \tau ,x\right) \right) \right) d\sigma
\end{equation*}%
\begin{equation*}
\times \left\vert v_{m}\left( \tau ,x\right) -v_{l}\left( \tau ,x\right)
\right\vert ^{2}dxd\tau
\end{equation*}%
\begin{equation*}
-\int\limits_{0}^{t}\tau \frac{d}{d\tau }\left( f_{\varepsilon }\left(
\left\Vert \varphi _{M}\left( v_{m}\left( \tau \right) \right) \right\Vert
_{L^{p}\left(
\mathbb{R}
^{n}\right) }\right) \right) \int\limits_{%
\mathbb{R}
^{n}\backslash B\left( 0,r\right) }\int_{0}^{1}\Psi _{\varepsilon }\left(
v_{m}\left( \tau ,x\right) +\sigma \left( v_{l}\left( \tau ,x\right)
-v_{m}\left( \tau ,x\right) \right) \right) d\sigma
\end{equation*}%
\begin{equation*}
\times \left\vert v_{m}\left( \tau ,x\right) -v_{l}\left( \tau ,x\right)
\right\vert ^{2}dxd\tau
\end{equation*}%
\begin{equation*}
-\int\limits_{0}^{t}\tau f_{\varepsilon }\left( \left\Vert \varphi
_{M}\left( v_{m}\left( \tau \right) \right) \right\Vert _{L^{p}\left(
\mathbb{R}
^{n}\right) }\right) \int\limits_{%
\mathbb{R}
^{n}\backslash B\left( 0,r\right) }\int_{0}^{1}\frac{d}{d\tau }\left( \Psi
_{\varepsilon }\left( v_{m}\left( \tau ,x\right) +\sigma \left( v_{l}\left(
\tau ,x\right) -v_{m}\left( \tau ,x\right) \right) \right) \right) d\sigma
\end{equation*}%
\begin{equation}
\times \left\vert v_{m}\left( \tau ,x\right) -v_{l}\left( \tau ,x\right)
\right\vert ^{2}dxd\tau \text{.}  \tag{3.14}
\end{equation}%
Since
\begin{equation*}
\frac{d}{d\tau }\left( f_{\varepsilon }\left( \left\Vert \varphi _{M}\left(
v_{m}\left( \tau \right) \right) \right\Vert _{L^{p}\left(
\mathbb{R}
^{n}\right) }\right) \right)
\end{equation*}%
\begin{equation*}
=\frac{f_{\varepsilon }^{\prime }\left( \left\Vert \varphi _{M}\left(
v_{m}\left( \tau \right) \right) \right\Vert _{L^{p}\left(
\mathbb{R}
^{n}\right) }\right) }{\left\Vert \varphi _{M}\left( v_{m}\left( \tau
\right) \right) \right\Vert _{L^{p}\left(
\mathbb{R}
^{n}\right) }^{p-1}}\int\limits_{%
\mathbb{R}
^{n}}\left\vert \varphi _{M}\left( v_{m}\left( \tau ,x\right) \right)
\right\vert ^{p-1}\varphi _{M}^{\prime }\left( v_{m}\left( \tau ,x\right)
\right) v_{mt}\left( \tau ,x\right) dx
\end{equation*}%
\begin{equation*}
\leq \frac{c_{2}}{\varepsilon ^{p-1}}\left( \int\limits_{%
\mathbb{R}
^{n}\backslash B\left( 0,r_{0}\right) }\left\vert v_{m}\left( \tau ,x\right)
\right\vert ^{p-1}\left\vert v_{mt}\left( \tau ,x\right) \right\vert
dx+M^{p-1}\int\limits_{B\left( 0,r_{0}\right) }\left\vert v_{mt}\left( \tau
,x\right) \right\vert dx\right)
\end{equation*}%
\begin{equation*}
\leq \frac{c_{3}}{\varepsilon ^{p-1}}\left( \left\Vert v_{mt}\left( \tau
\right) \right\Vert _{L^{2}\left(
\mathbb{R}
^{n}\backslash B\left( 0,r_{0}\right) \right) }+M^{p-1}\left\Vert
v_{mt}\left( \tau \right) \right\Vert _{L^{1}\left( B\left( 0,r_{0}\right)
\right) }\right)
\end{equation*}%
and%
\begin{equation*}
\int\limits_{%
\mathbb{R}
^{n}\backslash B\left( 0,r\right) }\int_{0}^{1}\frac{d}{d\tau }\left( \Psi
_{\varepsilon }\left( v_{l}\left( \tau ,x\right) +\sigma \left( v_{m}\left(
\tau ,x\right) -v_{l}\left( \tau ,x\right) \right) \right) \right) d\sigma
\left\vert v_{m}\left( \tau ,x\right) -v_{l}\left( \tau ,x\right)
\right\vert ^{2}dx
\end{equation*}%
\begin{equation*}
\leq \int\limits_{%
\mathbb{R}
^{n}\backslash B\left( 0,r\right) }\int_{0}^{1}\Psi _{\varepsilon }^{\prime
}\left( v_{l}\left( \tau ,x\right) +\sigma \left( v_{m}\left( \tau ,x\right)
-v_{l}\left( \tau ,x\right) \right) \right) d\sigma \left\vert v_{m}\left(
\tau ,x\right) -v_{l}\left( \tau ,x\right) \right\vert ^{2}dx
\end{equation*}%
\begin{equation*}
\times \left( \left\Vert v_{mt}\left( \tau \right) \right\Vert _{L^{2}\left(
\mathbb{R}
^{n}\backslash B\left( 0,r\right) \right) }+\left\Vert v_{lt}\left( \tau
\right) \right\Vert _{L^{2}\left(
\mathbb{R}
^{n}\backslash B\left( 0,r\right) \right) }\right)
\end{equation*}%
\begin{equation*}
\leq \frac{c_{4}}{\varepsilon ^{\max \left\{ 0,3-p\right\} }}\left\Vert
v_{m}\left( \tau \right) -v_{l}\left( \tau \right) \right\Vert _{H^{2}\left(
\mathbb{R}
^{n}\backslash B\left( 0,r\right) \right) }^{2}
\end{equation*}%
\begin{equation*}
\times \left( \left\Vert v_{mt}\left( \tau \right) \right\Vert _{L^{2}\left(
\mathbb{R}
^{n}\backslash B\left( 0,r\right) \right) }+\left\Vert v_{lt}\left( \tau
\right) \right\Vert _{L^{2}\left(
\mathbb{R}
^{n}\backslash B\left( 0,r\right) \right) }\right)
\end{equation*}%
by (3.14), we find%
\begin{equation*}
\int\limits_{0}^{t}\tau f_{\varepsilon }\left( \left\Vert \varphi _{M}\left(
v_{m}\left( \tau \right) \right) \right\Vert _{L^{p}\left(
\mathbb{R}
^{n}\right) }\right) \int\limits_{%
\mathbb{R}
^{n}\backslash B\left( 0,r\right) }\int_{0}^{1}\Psi _{\varepsilon }\left(
v_{l}\left( \tau ,x\right) +\sigma \left( v_{m}\left( \tau ,x\right)
-v_{l}\left( \tau ,x\right) \right) \right) d\sigma
\end{equation*}%
\begin{equation*}
\times \frac{d}{d\tau }\left\vert v_{m}\left( \tau ,x\right) -v_{l}\left(
\tau ,x\right) \right\vert ^{2}dxd\tau
\end{equation*}%
\begin{equation*}
\geq -c_{5}\int\limits_{0}^{t}E_{%
\mathbb{R}
^{n}\backslash B\left( 0,r\right) }\left( v_{m}\left( \tau \right)
-v_{l}\left( \tau \right) \right) d\tau
\end{equation*}%
\begin{equation*}
-\frac{c_{5}}{\varepsilon ^{p-1}}\int\limits_{0}^{t}\tau E_{%
\mathbb{R}
^{n}\backslash B\left( 0,r\right) }\left( v_{m}\left( \tau \right)
-v_{l}\left( \tau \right) \right) \left\Vert v_{mt}\left( \tau \right)
\right\Vert _{L^{2}\left(
\mathbb{R}
^{n}\backslash B\left( 0,r_{0}\right) \right) }d\tau
\end{equation*}%
\begin{equation*}
-\frac{c_{5}M^{p-1}}{\varepsilon ^{p-1}}\int\limits_{0}^{t}\tau E_{%
\mathbb{R}
^{n}\backslash B\left( 0,r\right) }\left( v_{m}\left( \tau \right)
-v_{l}\left( \tau \right) \right) \left\Vert v_{mt}\left( \tau \right)
\right\Vert _{L^{1}\left( B\left( 0,r_{0}\right) \right) }d\tau
\end{equation*}%
\begin{equation*}
-\frac{c_{5}}{\varepsilon ^{\max \left\{ 0,3-p\right\} }}\int\limits_{0}^{t}%
\tau E_{%
\mathbb{R}
^{n}\backslash B\left( 0,r\right) }\left( v_{m}\left( \tau \right)
-v_{l}\left( \tau \right) \right) \left( \left\Vert v_{mt}\left( \tau
\right) \right\Vert _{L^{2}\left(
\mathbb{R}
^{n}\backslash B\left( 0,r\right) \right) }+\left\Vert v_{lt}\left( \tau
\right) \right\Vert _{L^{2}\left(
\mathbb{R}
^{n}\backslash B\left( 0,r\right) \right) }\right) d\tau \text{.}
\end{equation*}%
After applying Young inequality in the above inequality, we get%
\begin{equation*}
\int\limits_{0}^{t}\tau f_{\varepsilon }\left( \left\Vert \varphi _{M}\left(
v_{m}\left( \tau \right) \right) \right\Vert _{L^{p}\left(
\mathbb{R}
^{n}\right) }\right) \int\limits_{%
\mathbb{R}
^{n}\backslash B\left( 0,r\right) }\int_{0}^{1}\Psi _{\varepsilon }\left(
v_{l}\left( \tau ,x\right) +\sigma \left( v_{m}\left( \tau ,x\right)
-v_{l}\left( \tau ,x\right) \right) \right) d\sigma
\end{equation*}%
\begin{equation*}
\times \frac{d}{d\tau }\left\vert v_{m}\left( \tau ,x\right) -v_{l}\left(
\tau ,x\right) \right\vert ^{2}dxd\tau
\end{equation*}%
\begin{equation*}
\geq -c_{5}\int\limits_{0}^{t}E_{%
\mathbb{R}
^{n}\backslash B\left( 0,r\right) }\left( v_{m}\left( \tau \right)
-v_{l}\left( \tau \right) \right) d\tau -\frac{c_{5}}{\varepsilon ^{p-1}}\mu
\int\limits_{0}^{t}\tau E_{%
\mathbb{R}
^{n}\backslash B\left( 0,r\right) }\left( v_{m}\left( \tau \right)
-v_{l}\left( \tau \right) \right) d\tau
\end{equation*}%
\begin{equation*}
-\frac{c_{5}}{\varepsilon ^{p-1}\mu ^{2}}\int\limits_{0}^{t}\tau E_{%
\mathbb{R}
^{n}\backslash B\left( 0,r\right) }\left( v_{m}\left( \tau \right)
-v_{l}\left( \tau \right) \right) \left\Vert v_{mt}\left( \tau \right)
\right\Vert _{L^{2}\left(
\mathbb{R}
^{n}\backslash B\left( 0,r_{0}\right) \right) }^{2}d\tau
\end{equation*}%
\begin{equation*}
-\frac{c_{5}M^{p-1}}{\varepsilon ^{p-1}}\mu \int\limits_{0}^{t}\tau E_{%
\mathbb{R}
^{n}\backslash B\left( 0,r\right) }\left( v_{m}\left( \tau \right)
-v_{l}\left( \tau \right) \right) d\tau
\end{equation*}%
\begin{equation*}
-\frac{c_{5}M^{p-1}}{\varepsilon ^{p-1}\mu ^{2}}\int\limits_{0}^{t}\tau E_{%
\mathbb{R}
^{n}\backslash B\left( 0,r\right) }\left( v_{m}\left( \tau \right)
-v_{l}\left( \tau \right) \right) \left\Vert v_{mt}\left( \tau \right)
\right\Vert _{L^{1}\left( B\left( 0,r_{0}\right) \right) }^{2}d\tau
\end{equation*}%
\begin{equation*}
-\frac{c_{5}}{\varepsilon ^{\max \left\{ 0,3-p\right\} }}\mu
\int\limits_{0}^{t}\tau E_{%
\mathbb{R}
^{n}\backslash B\left( 0,r\right) }\left( v_{m}\left( \tau \right)
-v_{l}\left( \tau \right) \right) d\tau
\end{equation*}%
\begin{equation*}
-\frac{c_{5}}{\varepsilon ^{\max \left\{ 0,3-p\right\} }\mu ^{2}}%
\int\limits_{0}^{t}\tau E_{%
\mathbb{R}
^{n}\backslash B\left( 0,r\right) }\left( v_{m}\left( \tau \right)
-v_{l}\left( \tau \right) \right)
\end{equation*}%
\begin{equation}
\times \left( \left\Vert v_{mt}\left( \tau \right) \right\Vert _{L^{2}\left(
\mathbb{R}
^{n}\backslash B\left( 0,r\right) \right) }^{2}+\left\Vert v_{lt}\left( \tau
\right) \right\Vert _{L^{2}\left(
\mathbb{R}
^{n}\backslash B\left( 0,r\right) \right) }^{2}\right) d\tau \text{.}
\tag{3.15}
\end{equation}%
By (3.13) and (3.15), we obtain%
\begin{equation*}
\int\limits_{0}^{t}\tau f\left( \left\Vert v_{m}\left( \tau \right)
\right\Vert _{L^{p}\left(
\mathbb{R}
^{n}\right) }\right) \int\limits_{%
\mathbb{R}
^{n}\backslash B\left( 0,r\right) }\int_{0}^{1}\left\vert v_{m}\left( \tau
,x\right) +\sigma \left( v_{l}\left( \tau ,x\right) -v_{m}\left( \tau
,x\right) \right) \right\vert ^{p-2}d\sigma
\end{equation*}%
\begin{equation*}
\times \frac{d}{d\tau }\left\vert v_{m}\left( \tau ,x\right) -v_{l}\left(
\tau ,x\right) \right\vert ^{2}dxd\tau
\end{equation*}%
\begin{equation*}
\geq -c_{6}\int\limits_{0}^{t}E_{%
\mathbb{R}
^{n}\backslash B\left( 0,r\right) }\left( v_{m}\left( \tau \right)
-v_{l}\left( \tau \right) \right) d\tau
\end{equation*}%
\begin{equation*}
-c_{6}\left( \frac{\mu }{\varepsilon ^{p-1}}+\frac{\mu }{\varepsilon ^{\max
\left\{ 0,3-p\right\} }}+\max_{0\leq s_{1},s_{2}\leq \varepsilon }\left\vert
f\left( s_{1}\right) -f\left( s_{2}\right) \right\vert +\frac{M^{p-1}}{%
\varepsilon ^{p-1}}\mu \right.
\end{equation*}%
\begin{equation*}
\left. +\frac{1}{^{M^{\beta }}}+\omega \left( \varepsilon \right) \right)
\int\limits_{0}^{t}\tau E_{%
\mathbb{R}
^{n}\backslash B\left( 0,r\right) }\left( v_{m}\left( \tau \right)
-v_{l}\left( \tau \right) \right) d\tau
\end{equation*}%
\begin{equation*}
-c_{6}\left( \frac{1}{\varepsilon ^{p-1}\mu ^{2}}+\frac{1}{\varepsilon
^{\max \left\{ 0,3-p\right\} }\mu ^{2}}\right) \int\limits_{0}^{t}\tau E_{%
\mathbb{R}
^{n}\backslash B\left( 0,r\right) }\left( v_{m}\left( \tau \right)
-v_{l}\left( \tau \right) \right)
\end{equation*}%
\begin{equation*}
\times \left( \left\Vert v_{mt}\left( \tau \right) \right\Vert _{L^{2}\left(
\mathbb{R}
^{n}\backslash B\left( 0,r_{0}\right) \right) }^{2}+\left\Vert v_{lt}\left(
\tau \right) \right\Vert _{L^{2}\left(
\mathbb{R}
^{n}\backslash B\left( 0,r_{0}\right) \right) }^{2}\right) d\tau
\end{equation*}%
\begin{equation}
-c_{6}\frac{M^{p-1}}{\varepsilon ^{p-1}\mu ^{2}}\int\limits_{0}^{t}\tau E_{%
\mathbb{R}
^{n}\backslash B\left( 0,r\right) }\left( v_{m}\left( \tau \right)
-v_{l}\left( \tau \right) \right) \left\Vert v_{mt}\left( \tau \right)
\right\Vert _{L^{1}\left( B\left( 0,r_{0}\right) \right) }^{2}d\tau ,\text{
\ }\forall r\geq r_{0}.  \tag{3.16}
\end{equation}%
To complete the proof, let us estimate the term $\left\Vert v_{mt}\left(
\tau \right) \right\Vert _{L^{1}\left( B\left( 0,r_{0}\right) \right) }^{2}$%
. By the conditions of the lemma, we have%
\begin{equation*}
\left\Vert v_{mt}\left( \tau \right) \right\Vert _{L^{1}\left( B\left(
0,r_{0}\right) \right) }^{2}=\left( \int\limits_{B\left( 0,r_{0}\right)
}\left\vert v_{mt}\left( \tau ,x\right) \right\vert dx\right) ^{2}
\end{equation*}%
\begin{equation*}
=\left( \int\limits_{B\left( 0,r_{0}\right) }\frac{a\left( x\right) +\lambda
}{a\left( x\right) +\lambda }\left\vert v_{mt}\left( \tau ,x\right)
\right\vert dx\right) ^{2}
\end{equation*}%
\begin{equation*}
\leq \left( \frac{1}{\lambda }\int\limits_{B\left( 0,r_{0}\right) }a\left(
x\right) \left\vert v_{mt}\left( \tau ,x\right) \right\vert
dx+\int\limits_{B\left( 0,r_{0}\right) }\frac{\lambda }{a\left( x\right)
+\lambda }\left\vert v_{mt}\left( \tau ,x\right) \right\vert dx\right) ^{2}
\end{equation*}%
\begin{equation}
\leq \frac{c_{7}}{\lambda ^{2}}\int\limits_{%
\mathbb{R}
^{n}}a\left( x\right) \left\vert v_{mt}\left( \tau ,x\right) \right\vert
^{2}dx+c_{7}\int\limits_{B\left( 0,r_{0}\right) }\left( \frac{\lambda }{%
a\left( x\right) +\lambda }\right) ^{2}dx\text{, \ \ \ }\forall \lambda >0%
\text{.}  \tag{3.17}
\end{equation}%
Since, by Lebesgue dominated convergence theorem, $\lim_{\lambda \rightarrow
0^{+}}\int\limits_{B\left( 0,r_{0}\right) }\left( \frac{\lambda }{a\left(
x\right) +\lambda }\right) ^{2}dx=0$, we can choose positive parameters $%
\varepsilon ,$ $M,$ $\mu $ and $\lambda $ such that%
\begin{equation*}
c_{6}\left( \frac{\mu }{\varepsilon ^{p-1}}+\frac{\mu }{\varepsilon ^{\max
\left\{ 0,3-p\right\} }}+\max_{0\leq s_{1},s_{2}\leq \varepsilon }\left\vert
f\left( s_{1}\right) -f\left( s_{2}\right) \right\vert +\frac{M^{p-1}}{%
\varepsilon ^{p-1}}\mu \right.
\end{equation*}%
\begin{equation*}
\left. +\frac{1}{^{M^{\beta }}}+\omega \left( \varepsilon \right) \right)
+c_{6}c_{7}\frac{M^{p-1}}{\varepsilon ^{p-1}\mu ^{2}}\int\limits_{B\left(
0,r_{0}\right) }\left( \frac{\lambda }{a\left( x\right) +\lambda }\right)
^{2}dx\leq \gamma \text{.}
\end{equation*}%
Thus, by (3.10), (3.16) and (3.17), the proof of the lemma is complete.
\end{proof}

Now, we prove the following theorem on the asymptotic compactness of $%
\left\{ S(t)\right\} _{t\geq 0}$ in $H^{2}\left(
\mathbb{R}
^{n}\right) \times L^{2}\left(
\mathbb{R}
^{n}\right) $, which plays a key role in the existence of the global
attractor.

\begin{theorem}
Assume that the conditions (2.3)-(2.5) hold and $B$ is a bounded subset of$\
H^{2}\left(
\mathbb{R}
^{n}\right) \times L^{2}\left(
\mathbb{R}
^{n}\right) $. Then for every sequence of the form $\left\{ S(t_{k})\varphi
_{k}\right\} _{k=1}^{\infty },$ where $\left\{ \varphi _{k}\right\}
_{k=1}^{\infty }\subset B$, $t_{k}\rightarrow \infty ,$ \ has a convergent
subsequence in $H^{2}\left(
\mathbb{R}
^{n}\right) \times L^{2}\left(
\mathbb{R}
^{n}\right) $.
\end{theorem}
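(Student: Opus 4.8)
The plan is to combine the two classical ingredients for asymptotic compactness on unbounded domains --- a uniform tail estimate and asymptotic compactness on balls --- with the $\tau$\nobreakdash-weighted energy identity that is tailored to Lemmas 3.1--3.2. First I would extract the dissipativity contained in the energy equality (2.6): since $f\geq 0$ and $|\int h u|$ is absorbed by $\tfrac{\lambda}{4}\|u\|_{L^2}^2+c\|h\|_{L^2}^2$, the functional $\mathcal{E}(u)=E_{\mathbb{R}^n}(u)+\tfrac1p F(\|u\|_{L^p}^p)-\int hu$ is bounded below and nonincreasing along trajectories, so for $\varphi_k\in B$ the orbit stays in a fixed ball of $H^2\times L^2$ for all $t\geq0$ and $\int_0^\infty\int_{\mathbb{R}^n}\alpha|u_{kt}|^2\,dx\,d\tau\leq C(B)$, where $u_k$ denotes the solution with data $\varphi_k$. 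Fixing $T>0$ and setting $v_k(s):=u_k(t_k-T+s)$ on $[0,T]$ (defined once $t_k>T$, with $v_k(T)=S(t_k)\varphi_k$), I would pass to a subsequence so that $v_k\rightharpoonup v$ $\ast$-weakly in $L^\infty(0,T;H^2(\mathbb{R}^n))$, $v_{kt}\rightharpoonup v_t$ $\ast$-weakly in $L^\infty(0,T;L^2(\mathbb{R}^n))$, $v_k\to v$ in $C([0,T];H^{2-\varepsilon}(B(0,\rho)))$ for every $\rho$ (Aubin--Lions--Simon, using that $v_{ktt}$ is bounded in $L^\infty(0,T;H^{-2})$), and --- using equicontinuity of $s\mapsto\|v_k(s)\|_{L^p}$ coming from the $L^2$-bound on $v_{kt}$ and interpolation with the $H^2$-bound --- so that $\{\|v_k(\cdot)\|_{L^p}\}_k$ converges uniformly on $[0,T]$. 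This places the situation in the setting of Lemmas 3.1--3.2.

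Next I would establish the uniform tail estimate: for every $\eta>0$ there exist $r\geq r_0$ and $k_0$ with $\sup_{s\in[0,T]}E_{\mathbb{R}^n\setminus B(0,r)}(v_k(s))<\eta$ for $k\geq k_0$, together with the corresponding $L^p$-tail estimate obtained from Sobolev embedding on the exterior domain. For this I would test (2.1) against $\theta(\cdot/r)u_{kt}$ and against $\theta(\cdot/r)u_k$, where $\theta$ vanishes on $B(0,1)$ and equals $1$ outside $B(0,2)$: condition (2.4) turns the damping contribution into a term $\geq\alpha_0\int\theta(\cdot/r)|u_{kt}|^2$, the commutator terms are $O(1/r)$, and combining the resulting identities yields a differential inequality for the localized energy whose solution, evaluated at the late time $t_k-T+s\to\infty$, gives decay plus an $O(1/r)$ remainder. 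This is exactly where ``effectiveness of the dissipation for large $x$'' is used.

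The heart of the argument is the weighted energy method applied to the difference $w=v_m-v_l$, which solves $w_{tt}+\Delta^2 w+\alpha w_t+\lambda w+g=0$ with $g=f(\|v_m\|_{L^p})|v_m|^{p-2}v_m-f(\|v_l\|_{L^p})|v_l|^{p-2}v_l$. Testing against $\tau w_t$ and integrating over $[0,T]\times\mathbb{R}^n$ gives
\[
T\,E_{\mathbb{R}^n}(w(T))=\int_0^T E_{\mathbb{R}^n}(w(\tau))\,d\tau-\int_0^T\tau\int_{\mathbb{R}^n}\alpha|w_t|^2\,dx\,d\tau-\int_0^T\tau\int_{\mathbb{R}^n}g\,w_t\,dx\,d\tau .
\]
The second term on the right is $\leq0$ and is discarded; the third is, up to sign, precisely the quantity estimated in Lemma 3.2, hence bounded by $\gamma\int_0^T\tau E_{\mathbb{R}^n\setminus B(0,r)}(w)+c_\gamma\int_0^T E_{\mathbb{R}^n\setminus B(0,r)}(w)+c_\gamma\int_0^T\tau(\|\sqrt{\alpha}\,v_{mt}\|_{L^2}^2+\|\sqrt{\alpha}\,v_{lt}\|_{L^2}^2)E_{\mathbb{R}^n\setminus B(0,r)}(w)+K_r^{m,l}(T)$; using $E(a-b)\leq2E(a)+2E(b)$, the tail estimate, the uniform bound $\int_0^\infty\int\alpha|v_{kt}|^2\leq C(B)$, and $\limsup_m\limsup_l|K_r^{m,l}(T)|=0$, each summand tends to $0$ under $\limsup_m\limsup_l$ followed by $r\to\infty$ (with $\gamma$ kept fixed). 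For the remaining $\int_0^T E_{\mathbb{R}^n}(w)\,d\tau$ I would split at $B(0,r)$: the exterior part is $\leq\varepsilon(r)T$ by the tail estimate, while the interior part $\int_0^T E_{B(0,r)}(w)\,d\tau$ is shown to vanish in the iterated limit by combining (i) $\int_0^T\|w\|_{L^2(B(0,r))}^2\to0$ from strong $L^2_{\mathrm{loc}}$-convergence; (ii) the localized multiplier identity obtained by testing the difference equation against $\zeta w$ with $\zeta\equiv1$ on $B(0,r)$ and supported in $B(0,2r)$, which reduces $\int_0^T\int\zeta|\Delta w|^2-\int_0^T\int\zeta|w_t|^2$ to terms controlled by the strong convergence of $w$ and $\nabla w$ on $B(0,2r)$, by $g\in L^\infty(0,T;L^{p/(p-1)})$, and by Lemma 3.1 for the nonlocal piece on $B(0,2r)$; and (iii) a local smoothing estimate for the plate operator (equivalently the compensated-compactness device of [11]) yielding $\Delta v_k\to\Delta v$ strongly in $L^2_{\mathrm{loc}}((0,T)\times\mathbb{R}^n)$, hence $\int_0^T\int\zeta|\Delta w|^2\to0$ and then $\int_0^T\int\zeta|w_t|^2\to0$.

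Dividing the identity by $T$ and passing to $\limsup_m\limsup_l$ and then $r\to\infty$ gives $\limsup_m\limsup_l E_{\mathbb{R}^n}(v_m(T)-v_l(T))=0$; since $2E_{\mathbb{R}^n}(\cdot)$ is equivalent to the $H^2\times L^2$ norm squared, the (sub)sequence $\{S(t_m)\varphi_m\}$ is asymptotically Cauchy, hence has a convergent subsequence, which is the claim. I expect the decisive obstacle to be the interior step (ii)--(iii): reconciling the total absence of compactness of $H^2\times L^2$ on balls with the requirement that $\int_0^T E_{B(0,r)}(w)\,d\tau$ be negligible --- this is exactly what forces the use of Lemmas 3.1--3.2, of the localized multiplier identities, and of the full strength of (2.3)--(2.4) --- together with the $L^p$-tail estimate needed to pass the nonlocal coefficient $f(\|v_k\|_{L^p})$ through the limit.
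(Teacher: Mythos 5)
Your skeleton matches the paper's in several places (reduction to a sequential Cauchy estimate, translation to late times $v_k(s)=u_k(t_k-T+s)$, the weak-star limits, the use of Lemmas 3.1--3.2 to handle $\int_0^T\tau\,g\,w_t$, and the final division by the arbitrary time $T$), but the decisive interior step rests on a claim that is false, and this is precisely the point the whole machinery exists to circumvent. You assert in (iii) a ``local smoothing estimate for the plate operator'' giving $\Delta v_k\to\Delta v$ strongly in $L^2_{\mathrm{loc}}((0,T)\times\mathbb{R}^n)$, and hence that $\int_0^T E_{B(0,r)}(v_m-v_l)\,d\tau\to 0$ in the iterated limit. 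No such smoothing holds: inside $B(0,r_0)$ the damping is only positive a.e.\ with no lower bound, the plate dynamics there is essentially conservative, and strong local convergence of $(\Delta v_k,v_{kt})$ in $L^2$ is exactly the lack-of-compactness obstruction, not an available tool. Your step (ii) cannot rescue this: testing the difference equation against $\zeta w$ controls only the \emph{indefinite} combination $\int\!\!\int\zeta|\Delta w|^2-\int\!\!\int\zeta|w_t|^2$, which says nothing about either term separately. The paper's Step 2 instead uses the Morawetz/Pohozaev multiplier $\sum_i x_i(1-\eta_{2r})w_{x_i}+\tfrac{n-1}{2}(1-\eta_{2r})w$, whose virtue is that it produces the \emph{positive-definite} combination $\tfrac32\int_0^T\|\Delta w\|^2_{L^2(B(0,2r))}+\tfrac12\int_0^T\|w_t\|^2_{L^2(B(0,2r))}$ on the left, bounded by $c\bigl(1+\tfrac{T}{r}+T\|h\|_{L^2(\mathbb{R}^n\setminus B(0,r))}\bigr)$ --- a uniform \emph{bound}, not vanishing. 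That bound suffices: the $\tau w_t$ identity gives $T\,E_{\mathbb{R}^n}(w(T))\leq\int_0^T E_{\mathbb{R}^n}(w)\,d\tau+o(1)\leq C$, and smallness comes from dividing by the freely chosen $T=T_0$, not from the time-integrated energy of the difference tending to zero. Your argument needs the vanishing only because you try to conclude before dividing by $T$; the correct mechanism makes your (ii)--(iii) both unobtainable and unnecessary.

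A secondary gap is the pointwise-in-time tail estimate $\sup_{s\in[0,T]}E_{\mathbb{R}^n\setminus B(0,r)}(v_k(s))<\eta$. The differential-inequality route you sketch (testing against $\theta(\cdot/r)u_{kt}$ and $\theta(\cdot/r)u_k$) runs into the nonlocal coefficient: the perturbation term generated by $\tfrac{d}{dt}f(\|u\|_{L^p})$ is controlled only by the \emph{global} $\|u_t\|_{L^2(\mathbb{R}^n)}$, which is bounded but neither small nor time-integrable, so the claimed decay of the localized energy is not justified as stated. The paper sidesteps this entirely by proving only a time-integrated exterior bound (its (3.22)--(3.23)), obtained with the multiplier $\eta_r^2 v_m$ for which the nonlocal term $f(\|v_m\|_{L^p})\int\eta_r^2|v_m|^p$ has a favorable sign by (2.5); a time-integrated bound is all the final $T$-weighted argument requires. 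I would advise reworking the proof around these two points: replace (ii)--(iii) by the Morawetz-type interior multiplier, and weaken the tail estimate to its integrated form.
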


\begin{proof}
To get the claim of the theorem, it is sufficient to prove the following
sequential limit estimate
\begin{equation}
\liminf\limits_{k\rightarrow \infty }\liminf\limits_{m\rightarrow \infty
}\left\Vert S\left( t_{k}\right) \varphi _{k}-S\left( t_{m}\right) \varphi
_{m}\right\Vert _{H^{2}\left(
\mathbb{R}
^{n}\right) \times L^{2}\left(
\mathbb{R}
^{n}\right) }=0\text{,}  \tag{3.18}
\end{equation}%
for every $\left\{ \varphi _{k}\right\} _{k=1}^{\infty }\subset B$ and $%
t_{k}\rightarrow \infty $. Indeed, establishing (3.18) and using the
argument at the end of the proof of [12, Lemma 3.4], we obtain the desired
result.

Now, by (2.3), (2.5) and (2.6), we have
\begin{equation}
\underset{t\geq 0}{\sup }\underset{\varphi \in B}{\sup }\left\Vert S\left(
t\right) \varphi \right\Vert _{H^{2}\left(
\mathbb{R}
^{n}\right) \times L^{2}\left(
\mathbb{R}
^{n}\right) }<\infty .  \tag{3.19}
\end{equation}%
Since $\left\{ \varphi _{k}\right\} _{k=1}^{\infty }$ is bounded in $%
H^{2}\left(
\mathbb{R}
^{n}\right) \times L^{2}\left(
\mathbb{R}
^{n}\right) $, by (3.19), the sequence $\left\{ S\left( .\right) \varphi
_{k}\right\} _{k=1}^{\infty }$ is bounded in $C_{b}\left( 0,\infty
;H^{2}\left(
\mathbb{R}
^{n}\right) \times L^{2}\left(
\mathbb{R}
^{n}\right) \right) $, where $C_{b}\left( 0,\infty ;H^{2}\left(
\mathbb{R}
^{n}\right) \times L^{2}\left(
\mathbb{R}
^{n}\right) \right) $ is the space of continuously bounded functions from $%
\left[ 0,\infty \right) $ to $H^{2}\left(
\mathbb{R}
^{n}\right) \times L^{2}\left(
\mathbb{R}
^{n}\right) $. Then for any $T_{0}\geq 0$ \ there exists a subsequence $%
\left\{ k_{m}\right\} _{m=1}^{\infty }$ such that $t_{k_{m}}\geq T_{0}$, and
\begin{equation}
\left\{
\begin{array}{c}
v_{m}\rightarrow v\text{ weakly star in }L^{\infty }\left( 0,\infty
;H^{2}\left(
\mathbb{R}
^{n}\right) \right) \text{,} \\
v_{mt}\rightarrow v_{t}\text{ weakly star in }L^{\infty }\left( 0,\infty
;L^{2}\left(
\mathbb{R}
^{n}\right) \right) \text{,} \\
\left\Vert v_{m}\left( t\right) \right\Vert _{L^{p}\left(
\mathbb{R}
^{n}\right) }^{p}\rightarrow q\left( t\right) \text{ weakly star in }%
W^{1,\infty }\left( 0,\infty \right) \text{,} \\
v_{m}\left( t\right) \rightarrow v\left( t\right) \text{ weakly in }%
H^{2}\left(
\mathbb{R}
^{n}\right) \text{, }\forall t\geq 0\text{,}%
\end{array}%
\right.  \tag{3.20}
\end{equation}%
for some $q\in W^{1,\infty }\left( 0,\infty \right) $ and $v\in L^{\infty
}\left( 0,\infty ;H^{2}\left(
\mathbb{R}
^{n}\right) \right) \cap W^{1,\infty }\left( 0,\infty ;L^{2}\left(
\mathbb{R}
^{n}\right) \right) $, where $\left( v_{m}(t\right) ,v_{mt}\left( t\right)
)=S(t+t_{k_{m}}-T_{0})\varphi _{k_{m}}$. By (2.1)$,$ we also have%
\begin{equation*}
v_{mtt}(t,x)-v_{ltt}(t,x)+\Delta ^{2}\left( v_{m}(t,x)-v_{l}(t,x)\right)
+\alpha (x)\left( v_{mt}(t,x)-v_{lt}(t,x)\right) +\lambda \left(
v_{m}(t,x)-v_{l}(t,x)\right)
\end{equation*}%
\begin{equation}
=f(\left\Vert v_{l}\left( t\right) \right\Vert _{L^{p}\left(
\mathbb{R}
^{n}\right) }))\left\vert v_{l}(t,x)\right\vert
^{p-2}v_{l}(t,x)-f(\left\Vert v_{m}\left( t\right) \right\Vert _{L^{p}\left(
\mathbb{R}
^{n}\right) })\left\vert v_{m}(t,x)\right\vert ^{p-2}v_{m}(t,x).\newline
\tag{3.21}
\end{equation}

We obtain (3.18) by means of the sequential limit estimate of the energy of $%
v_{m}-v_{l}$ which is proved in the following three steps. In the first
step, we get the tail estimates, by using the effect of the damping term. In
the second step, we obtain the interior estimates. Finally, in the last
step, we get the sequential limit estimate of the energy in $%
\mathbb{R}
^{n}$, by considering the results obtained in the previous steps. Note that
we establish these estimates for the smooth solutions of (2.1)-(2.2) with
the initial data in $H^{4}\left(
\mathbb{R}
^{n}\right) \times H^{2}\left(
\mathbb{R}
^{n}\right) ,$ for which the estimates in the following text are justified.
These estimates can be extended to the weak solutions with the initial data
in $H^{2}\left(
\mathbb{R}
^{n}\right) \times L^{2}\left(
\mathbb{R}
^{n}\right) $ by the standard density arguments.\newline
\textbf{Step 1 (Tail estimates): }Taking into account (2.3), (2.4), (2.5)
and (2.6) we get%
\begin{equation}
\int\limits_{0}^{T}\left\Vert v_{mt}(t)\right\Vert _{L^{2}\left(
\mathbb{R}
^{n}\backslash B\left( 0,r_{0}\right) \right) }^{2}dt\leq c_{1}\text{, }%
\forall T\geq 0\text{.}  \tag{3.22}
\end{equation}%
Now, putting $v_{m}$ instead of $v$ in (2.1), we have%
\begin{equation*}
v_{mtt}(t,x)+\Delta ^{2}v_{m}(t,x)+\alpha (x)v_{mt}(t,x)+\lambda v_{m}(t,x)
\end{equation*}%
\begin{equation*}
+f(\left\Vert v_{m}(t)\right\Vert _{L^{p}\left(
\mathbb{R}
^{n}\right) })\left\vert v_{m}(t,x)\right\vert ^{p-2}v_{m}(t,x)=h\left(
x\right) \text{.}
\end{equation*}%
Let $\eta \in C^{\infty }\left(
\mathbb{R}
^{n}\right) $, $0\leq \eta \left( x\right) \leq 1$, $\eta \left( x\right)
=\left\{
\begin{array}{c}
0,\text{ }\left\vert x\right\vert \leq 1 \\
1,\text{ }\left\vert x\right\vert \geq 2%
\end{array}%
\right. $ and $\eta _{r}\left( x\right) =\eta \left( \frac{x}{r}\right) $.
Multiplying above equation by $\eta _{r}^{2}v_{m}$ and integrating over $%
\left( 0,T\right) \times
\mathbb{R}
^{n}$, we get%
\begin{equation*}
\int\limits_{0}^{T}\left( \left\Vert \eta _{r}\Delta v_{m}(t)\right\Vert
_{L^{2}\left(
\mathbb{R}
^{n}\right) }^{2}+\lambda \left\Vert \eta _{r}v_{m}(t)\right\Vert
_{L^{2}\left(
\mathbb{R}
^{n}\right) }^{2}\right) dt
\end{equation*}%
\begin{equation*}
=\int\limits_{0}^{T}\left\Vert \eta _{r}v_{mt}\left( t\right) \right\Vert
_{L^{2}\left(
\mathbb{R}
^{n}\right) }^{2}dt-\left. \left( \int\limits_{%
\mathbb{R}
^{n}}\eta _{r}^{2}\left( x\right) v_{mt}\left( t,x\right)
v_{m}(t,x)dx\right) \right\vert _{0}^{T}
\end{equation*}%
\begin{equation*}
-\frac{4}{r}\sum_{i=1}^{n}\int\limits_{0}^{T}\eta _{r}\left( x\right) \eta
_{x_{i}}\left( \frac{x}{r}\right) \Delta
v_{m}(t,x)v_{mx_{i}}(t,x)dxdt-\int\limits_{0}^{T}\int\limits_{%
\mathbb{R}
^{n}}\Delta \left( \eta _{r}^{2}\left( x\right) \right) \Delta
v_{m}(t,x)v_{m}(t,x)dxdt
\end{equation*}%
\begin{equation*}
-\frac{1}{2}\left. \left( \int\limits_{%
\mathbb{R}
^{n}}\eta _{r}^{2}\left( x\right) \alpha \left( x\right) \left(
v_{m}(t,x)\right) ^{2}dx\right) \right\vert _{0}^{T}
\end{equation*}%
\begin{equation*}
-\int\limits_{0}^{T}f(\left\Vert v_{m}(t)\right\Vert _{L^{p}\left(
\mathbb{R}
^{n}\right) })\int\limits_{%
\mathbb{R}
^{n}}\left\vert v_{m}(t,x)\right\vert ^{p}\eta _{r}^{2}\left( x\right)
dxdt+\int\limits_{0}^{T}\int\limits_{%
\mathbb{R}
^{n}}h\left( x\right) \eta _{r}^{2}\left( x\right) v_{m}\left( t,x\right)
dxdt.
\end{equation*}%
Taking into account (2.3), (2.5), (3.19) and (3.22) we obtain%
\begin{equation*}
\int\limits_{0}^{T}\left( \left\Vert \Delta \left( v_{m}(t)\right)
\right\Vert _{L^{2}\left(
\mathbb{R}
^{n}\backslash B\left( 0,2r\right) \right) }^{2}+\lambda \left\Vert
v_{m}(t)\right\Vert _{L^{2}\left(
\mathbb{R}
^{n}\backslash B\left( 0,2r\right) \right) }^{2}\right) dt
\end{equation*}%
\begin{equation}
\leq c_{2}\left( 1+\frac{T}{r}+T\left\Vert h\right\Vert _{L^{2}\left(
\mathbb{R}
^{n}\backslash B\left( 0,r\right) \right) }\right) \text{, \ \ }\forall
T\geq 0\text{ and }\forall r\geq r_{0}\text{.}  \tag{3.23}
\end{equation}%
\textbf{Step 2 (Interior estimates): }Multiplying (3.21) by $%
\sum\nolimits_{i=1}^{n}x_{i}\left( 1-\eta _{2r}\right) \left(
v_{m}-v_{l}\right) _{x_{i}}$\newline
$+\frac{1}{2}\left( n-1\right) \left( 1-\eta _{2r}\right) \left(
v_{m}-v_{l}\right) $, and integrating over $\left( 0,T\right) \times
\mathbb{R}
^{n}$, we find%
\begin{equation*}
\frac{3}{2}\int\limits_{0}^{T}\left\Vert \Delta \left( v_{m}\left( t\right)
-v_{l}\left( t\right) \right) \right\Vert _{L^{2}\left( B\left( 0,2r\right)
\right) }^{2}dt+\frac{1}{2}\int\limits_{0}^{T}\left\Vert v_{mt}\left(
t\right) -v_{lt}\left( t\right) \right\Vert _{L^{2}\left( B\left(
0,2r\right) \right) }^{2}dt
\end{equation*}%
\begin{equation*}
\leq \left\vert \sum\nolimits_{i=1}^{n}\left( \int\limits_{B\left(
0,4r\right) }\left( 1-\eta _{2r}\left( x\right) \right) x_{i}\left(
v_{m}(T,x)-v_{l}(T,x)\right) _{x_{i}}\left( v_{mt}(T,x)-v_{lt}(T,x)\right)
dx\right) \right\vert
\end{equation*}%
\begin{equation*}
+\left\vert \sum\nolimits_{i=1}^{n}\left( \int\limits_{B\left( 0,4r\right)
}\left( 1-\eta _{2r}\left( x\right) \right) x_{i}\left(
v_{m}(0,x)-v_{l}(0,x)\right) _{x_{i}}\left( v_{mt}(0,x)-v_{lt}(0,x)\right)
dx\right) \right\vert
\end{equation*}%
\begin{equation*}
+\frac{1}{2}\left( n-1\right) \left\vert \int\limits_{B\left( 0,4r\right)
}\left( 1-\eta _{2r}\left( x\right) \right) \left(
v_{mt}(T,x)-v_{lt}(T,x)\right) \left( v_{m}(T,x)-v_{l}(T,x)\right)
dx\right\vert
\end{equation*}%
\begin{equation*}
+\frac{1}{2}\left( n-1\right) \left\vert \int\limits_{B\left( 0,4r\right)
}\left( 1-\eta _{2r}\left( x\right) \right) \left(
v_{mt}(0,x)-v_{lt}(0,x)\right) \left( v_{m}(0,x)-v_{l}(0,x)\right)
dx\right\vert
\end{equation*}%
\begin{equation*}
+\frac{1}{4r}\left\vert
\sum\nolimits_{i=1}^{n}\int\limits_{0}^{T}\int\limits_{B\left( 0,4r\right)
\backslash B\left( 0,2r\right) }\eta _{x_{i}}\left( \frac{x}{2r}\right)
x_{i}\left( v_{mt}\left( t,x\right) -v_{lt}\left( t,x\right) \right)
^{2}dxdt\right\vert
\end{equation*}%
\begin{equation*}
+\frac{1}{4r}\left\vert
\sum\nolimits_{i=1}^{n}\int\limits_{0}^{T}\int\limits_{B\left( 0,4r\right)
\backslash B\left( 0,2r\right) }\eta _{x_{i}}\left( \frac{x}{2r}\right)
x_{i}\left( \Delta v_{m}\left( t,x\right) -\Delta v_{l}\left( t,x\right)
\right) ^{2}dxdt\right\vert
\end{equation*}%
\begin{equation*}
+\left\vert \sum\nolimits_{i=1}^{n}\int\limits_{0}^{T}\int\limits_{B\left(
0,4r\right) }\Delta \left( \left( 1-\eta _{2r}\left( x\right) \right)
x_{i}\right) \left( v_{m}\left( t,x\right) -v_{l}\left( t,x\right) \right)
_{x_{i}}\Delta \left( v_{m}\left( t,x\right) -v_{l}\left( t,x\right) \right)
dxdt\right\vert
\end{equation*}%
\begin{equation*}
+\frac{1}{r}\left\vert
\sum\nolimits_{i,j=1}^{n}\int\limits_{0}^{T}\int\limits_{B\left( 0,4r\right)
\backslash B\left( 0,2r\right) }\eta _{x_{j}}\left( \frac{x}{2r}\right)
x_{i}\left( v_{m}\left( t,x\right) -v_{l}\left( t,x\right) \right)
_{x_{i}x_{j}}\Delta \left( v_{m}\left( t,x\right) -v_{l}\left( t,x\right)
\right) dxdt\right\vert
\end{equation*}%
\begin{equation*}
+\frac{1}{2}\left( n-1\right) \left\vert
\int\limits_{0}^{T}\int\limits_{B\left( 0,4r\right) \backslash B\left(
0,2r\right) }\Delta \left( \left( 1-\eta _{2r}\left( x\right) \right)
\right) \left( v_{m}\left( t,x\right) -v_{l}\left( t,x\right) \right) \Delta
\left( v_{m}\left( t,x\right) -v_{l}\left( t,x\right) \right) dxdt\right\vert
\end{equation*}%
\begin{equation*}
+\frac{1}{2r}\left( n-1\right) \left\vert
\sum\nolimits_{i=1}^{n}\int\limits_{0}^{T}\int\limits_{B\left( 0,4r\right)
\backslash B\left( 0,2r\right) }\eta _{x_{i}}\left( \frac{x}{2r}\right)
\left( v_{m}\left( t,x\right) -v_{l}\left( t,x\right) \right) _{x_{i}}\Delta
\left( v_{m}\left( t,x\right) -v_{l}\left( t,x\right) \right) dxdt\right\vert
\end{equation*}%
\begin{equation*}
+\left\vert \sum\nolimits_{i=1}^{n}\int\limits_{0}^{T}\int\limits_{B\left(
0,4r\right) }\left( 1-\eta _{2r}\left( x\right) \right) x_{i}\left(
v_{m}\left( t,x\right) -v_{l}\left( t,x\right) \right) _{x_{i}}a\left(
x\right) \left( v_{mt}\left( t,x\right) -v_{lt}\left( t,x\right) \right)
dxdt\right\vert
\end{equation*}%
\begin{equation*}
+\frac{1}{2}\left( n-1\right) \left\vert
\int\limits_{0}^{T}\int\limits_{B\left( 0,4r\right) }\left( 1-\eta
_{2r}\left( x\right) \right) \left( v_{m}\left( t,x\right) -v_{l}\left(
t,x\right) \right) a\left( x\right) \left( v_{mt}\left( t,x\right)
-v_{lt}\left( t,x\right) \right) dxdt\right\vert
\end{equation*}%
\begin{equation*}
+\lambda \left\vert
\sum\nolimits_{i=1}^{n}\int\limits_{0}^{T}\int\limits_{B\left( 0,4r\right)
}\left( 1-\eta _{2r}\left( x\right) \right) x_{i}\left( v_{m}\left(
t,x\right) -v_{l}\left( t,x\right) \right) _{x_{i}}\left( v_{m}\left(
t,x\right) -v_{l}\left( t,x\right) \right) dxdt\right\vert
\end{equation*}%
\begin{equation*}
+\left\vert \sum\nolimits_{i=1}^{n}\int\limits_{0}^{T}\int\limits_{B\left(
0,4r\right) }\left( 1-\eta _{2r}\left( x\right) \right) x_{i}\left(
v_{m}\left( t,x\right) -v_{l}\left( t,x\right) \right) _{x_{i}}\right.
\end{equation*}%
\begin{equation*}
\left. \times \left( f(\left\Vert v_{m}\left( t\right) \right\Vert
_{L^{p}\left(
\mathbb{R}
^{n}\right) })\left\vert v_{m}(t,x)\right\vert ^{p-2}v_{m}(t,x)-f(\left\Vert
v_{l}\left( t\right) \right\Vert _{L^{p}\left(
\mathbb{R}
^{n}\right) })\left\vert v_{l}(t,x)\right\vert ^{p-2}v_{l}(t,x)\right)
dxdt\right\vert
\end{equation*}%
\begin{equation*}
+\frac{1}{2}\left( n-1\right) \left\vert
\int\limits_{0}^{T}\int\limits_{B\left( 0,4r\right) }\left( 1-\eta
_{2r}\left( x\right) \right) \left( v_{m}\left( t,x\right) -v_{l}\left(
t,x\right) \right) \right.
\end{equation*}%
\begin{equation*}
\left. \times \left( f(\left\Vert v_{m}\left( t\right) \right\Vert
_{L^{p}\left(
\mathbb{R}
^{n}\right) })\left\vert v_{m}(t,x)\right\vert ^{p-2}v_{m}(t,x)-f(\left\Vert
v_{l}\left( t\right) \right\Vert _{L^{p}\left(
\mathbb{R}
^{n}\right) })\left\vert v_{l}(t,x)\right\vert ^{p-2}v_{l}(t,x)\right)
dxdt\right\vert
\end{equation*}%
\begin{equation*}
\leq c_{3}r\left( \left\Vert \nabla v_{m}\left( T\right) -\nabla v_{l}\left(
T\right) \right\Vert _{L^{2}(B\left( 0,4r\right) )}+\left\Vert \nabla
v_{m}\left( 0\right) -\nabla v_{l}\left( 0\right) \right\Vert
_{L^{2}(B\left( 0,4r\right) )}\right)
\end{equation*}%
\begin{equation*}
c_{3}\left\Vert v_{mt}-v_{lt}\right\Vert _{L^{2}\left( 0,T;L^{2}\left(
B\left( 0,4r\right) \backslash B\left( 0,2r\right) \right) \right)
}^{2}+c_{3}\left\Vert v_{m}-v_{l}\right\Vert _{L^{2}\left( 0,T;H^{2}\left(
B\left( 0,4r\right) \backslash B\left( 0,2r\right) \right) \right) }^{2}
\end{equation*}%
\begin{equation}
+c_{3}r\sqrt{T}\left\Vert \nabla v_{m}-\nabla v_{l}\right\Vert
_{L^{2}(\left( 0,T\right) \times B\left( 0,4r\right) )},  \tag{3.24}
\end{equation}%
since, by (2.5) and (3.19),
\begin{equation*}
\left\Vert f(\left\Vert v_{m}\left( t\right) \right\Vert _{L^{p}\left(
\mathbb{R}
^{n}\right) })\left\vert v_{m}(t)\right\vert ^{p-2}v_{m}(t)-f(\left\Vert
v_{l}\left( t\right) \right\Vert _{L^{p}\left(
\mathbb{R}
^{n}\right) })\left\vert v_{l}(t)\right\vert ^{p-2}v_{l}(t)\right\Vert
_{L^{2}(B\left( 0,4r\right) )}\leq \widetilde{c}\text{.}
\end{equation*}%
Since the sequence $\left\{ v_{m}\right\} _{m=1}^{\infty }$ is bounded in $%
C\left( \left[ 0,T\right] ;H^{2}\left(
\mathbb{R}
^{n}\right) \right) $ and the sequence $\left\{ v_{mt}\right\}
_{m=1}^{\infty }$ is bounded in $C\left( \left[ 0,T\right] ;L^{2}\left(
\mathbb{R}
^{n}\right) \right) $, by the generalized Arzela-Ascoli theorem, the
sequence $\left\{ v_{m}\right\} _{m=1}^{\infty }$ $\ $is relatively compact
in $C\left( \left[ 0,T\right] ;H^{1}\left( B\left( 0,r\right) \right)
\right) $ for every $r>0$. So, according to (3.20)$_{1}$-(3.20)$_{2}$, the
sequence $\left\{ v_{m}\right\} _{m=1}^{\infty }$ strongly converges to $v$
in $C\left( \left[ 0,T\right] ;H^{1}\left( B\left( 0,r\right) \right)
\right) $. Then, by using (3.22) and (3.23) in (3.24) , we get
\begin{equation*}
\underset{m\rightarrow \infty }{\lim \sup }\text{ }\underset{l\rightarrow
\infty }{\lim \sup }\int\limits_{0}^{T}\left[ \left\Vert \Delta \left(
v_{m}\left( t\right) -v_{l}\left( t\right) \right) \right\Vert _{L^{2}\left(
B\left( 0,2r\right) \right) }^{2}+\left\Vert v_{mt}\left( t\right)
-v_{lt}\left( t\right) \right\Vert _{L^{2}\left( B\left( 0,2r\right) \right)
}^{2}\right] dt
\end{equation*}%
\begin{equation*}
\leq c_{4}\left( 1+\frac{T}{r}+T\left\Vert h\right\Vert _{L^{2}\left(
\mathbb{R}
^{n}\backslash B\left( 0,r\right) \right) }\right) \text{, \ \ }\forall
T\geq 0\text{ and }\forall r\geq r_{0}\text{.}
\end{equation*}%
\textbf{Step 3 (Estimates in }$%
\mathbb{R}
^{n}$\textbf{): }By using (3.22), (3.23) and the last estimate of the
previous step, we obtain%
\begin{equation*}
\underset{m\rightarrow \infty }{\lim \sup }\text{ }\underset{l\rightarrow
\infty }{\lim \sup }\int\limits_{0}^{T}\left[ \left\Vert v_{m}\left(
t\right) -v_{l}\left( t\right) \right\Vert _{H^{2}\left(
\mathbb{R}
^{n}\right) }^{2}+\left\Vert v_{mt}\left( t\right) -v_{lt}\left( t\right)
\right\Vert _{L^{2}\left(
\mathbb{R}
^{n}\right) }^{2}\right] dt
\end{equation*}%
\begin{equation*}
\leq c_{4}\left( 1+\frac{T}{r}+T\left\Vert h\right\Vert _{L^{2}\left(
\mathbb{R}
^{n}\backslash B\left( 0,r\right) \right) }\right) \text{, \ \ }\forall
T\geq 0\text{ and }\forall r\geq r_{0}\text{. }
\end{equation*}%
Passing to limit as $r\rightarrow \infty $ in the last inequality, we get
\begin{equation}
\underset{m\rightarrow \infty }{\lim \sup }\text{ }\underset{l\rightarrow
\infty }{\lim \sup }\int\limits_{0}^{T}\left[ \left\Vert v_{m}\left(
t\right) -v_{l}\left( t\right) \right\Vert _{H^{2}\left(
\mathbb{R}
^{n}\right) }^{2}+\left\Vert v_{mt}\left( t\right) -v_{lt}\left( t\right)
\right\Vert _{L^{2}\left(
\mathbb{R}
^{n}\right) }^{2}\right] dt\leq c_{5}\text{, }\forall T\geq 0.  \tag{3.25}
\end{equation}%
Multiplying (3.21) by $2t\left( v_{mt}-v_{lt}\right) $, integrating over $%
\left( 0,T\right) \times
\mathbb{R}
^{n}$, using integration by parts and considering (2.4), we find%
\begin{equation*}
T\left\Vert \Delta \left( v_{m}\left( T\right) -v_{l}\left( T\right) \right)
\right\Vert _{L^{2}\left(
\mathbb{R}
^{n}\right) }^{2}+T\left\Vert v_{mt}\left( T\right) -v_{lt}\left( T\right)
\right\Vert _{L^{2}\left(
\mathbb{R}
^{n}\right) }^{2}+T\lambda \left\Vert v_{m}\left( T\right) -v_{l}\left(
T\right) \right\Vert _{L^{2}\left(
\mathbb{R}
^{n}\right) }^{2}
\end{equation*}%
\begin{equation*}
+2\alpha _{0}\int\limits_{0}^{T}\int\limits_{%
\mathbb{R}
^{n}\backslash B\left( 0,r\right) }t\left( v_{mt}\left( t\right)
-v_{lt}\left( t\right) \right) ^{2}dxdt
\end{equation*}%
\begin{equation*}
\leq \int\limits_{0}^{T}\left\Vert v_{mt}\left( t\right) -v_{lt}\left(
t\right) \right\Vert _{L^{2}\left(
\mathbb{R}
^{n}\right) }^{2}dt+\int\limits_{0}^{T}\left\Vert \Delta \left(
v_{m}(t)-v_{l}(t)\right) \right\Vert _{L^{2}\left(
\mathbb{R}
^{n}\right) }^{2}dt+\lambda \int\limits_{0}^{T}\left\Vert
v_{m}(t)-v_{l}(t)\right\Vert _{L^{2}\left(
\mathbb{R}
^{n}\right) }^{2}dt
\end{equation*}%
\begin{equation*}
+2\int\limits_{0}^{T}\int\limits_{%
\mathbb{R}
^{n}}t\left( f(\left\Vert v_{l}\left( t\right) \right\Vert _{L^{p}\left(
\mathbb{R}
^{n}\right) })\left\vert v_{l}(t,x)\right\vert ^{p-2}v_{l}(t,x)-f(\left\Vert
v_{m}\left( t\right) \right\Vert _{L^{p}\left(
\mathbb{R}
^{n}\right) })\left\vert v_{m}(t,x)\right\vert ^{p-2}v_{m}(t,x)\right)
\end{equation*}%
\begin{equation}
\times \left( v_{mt}\left( t,x\right) -v_{lt}\left( t,x\right) \right) dxdt%
\text{.}  \tag{3.26}
\end{equation}%
Multiplying (3.21) by $t\eta _{r}\left( v_{m}-v_{l}\right) $, integrating
over $\left( 0,T\right) \times
\mathbb{R}
^{n}$ and using integration by parts, we get%
\begin{equation*}
T\int\limits_{%
\mathbb{R}
^{n}}\left( v_{mt}\left( T,x\right) -v_{lt}\left( T,x\right) \right) \eta
_{r}\left( x\right) \left( v_{m}\left( T,x\right) -v_{l}\left( T,x\right)
\right) dx-\int\limits_{0}^{T}\int\limits_{%
\mathbb{R}
^{n}}t\eta _{r}\left( x\right) \left( v_{mt}\left( t,x\right) -v_{lt}\left(
t,x\right) \right) ^{2}dxdt
\end{equation*}%
\begin{equation*}
-\int\limits_{0}^{T}\int\limits_{%
\mathbb{R}
^{n}}\eta _{r}\left( x\right) \left( v_{mt}\left( t,x\right) -v_{lt}\left(
t,x\right) \right) \left( v_{m}\left( t,x\right) -v_{l}\left( t,x\right)
\right) dxdt
\end{equation*}%
\begin{equation*}
+\int\limits_{0}^{T}\int\limits_{%
\mathbb{R}
^{n}}t\left( \Delta \left( v_{m}\left( t,x\right) -v_{l}\left( t,x\right)
\right) \right) ^{2}\eta _{r}\left( x\right) dxdt
\end{equation*}%
\begin{equation*}
+2\sum\nolimits_{i=1}^{n}\int\limits_{0}^{T}\int\limits_{%
\mathbb{R}
^{n}}\left( \Delta \left( v_{m}\left( t,x\right) -v_{l}\left( t,x\right)
\right) \right) t\left( \eta _{r}\left( x\right) \right) _{x_{i}}\left(
v_{m}\left( t,x\right) -v_{l}\left( t,x\right) \right) _{x_{i}}dxdt
\end{equation*}%
\begin{equation*}
+\int\limits_{0}^{T}\int\limits_{%
\mathbb{R}
^{n}}\left( \Delta \left( v_{m}\left( t,x\right) -v_{l}\left( t,x\right)
\right) \right) t\Delta \left( \eta _{r}\left( x\right) \right) \left(
v_{m}\left( t,x\right) -v_{l}\left( t,x\right) \right) dxdt
\end{equation*}%
\begin{equation*}
+\frac{T}{2}\int\limits_{%
\mathbb{R}
^{n}}\alpha \left( x\right) \left( v_{m}\left( T,x\right) -v_{l}\left(
T,x\right) \right) ^{2}\eta _{r}\left( x\right) dx
\end{equation*}%
\begin{equation*}
-\frac{1}{2}\int\limits_{0}^{T}\int\limits_{%
\mathbb{R}
^{n}}\alpha \left( x\right) \eta _{r}\left( x\right) \left( v_{m}\left(
t,x\right) -v_{l}\left( t,x\right) \right) ^{2}dxdt+\lambda
\int\limits_{0}^{T}\int\limits_{%
\mathbb{R}
^{n}}t\left( v_{m}\left( t,x\right) -v_{l}\left( t,x\right) \right) ^{2}\eta
_{r}\left( x\right) dxdt
\end{equation*}%
\begin{equation*}
+\int\limits_{0}^{T}\int\limits_{%
\mathbb{R}
^{n}}t\left( f(\left\Vert v_{m}\left( t\right) \right\Vert _{L^{p}\left(
\mathbb{R}
^{n}\right) })\left\vert v_{m}(t,x)\right\vert ^{p-2}v_{m}(t,x)-f(\left\Vert
v_{l}\left( t\right) \right\Vert _{L^{p}\left(
\mathbb{R}
^{n}\right) })\left\vert v_{l}(t,x)\right\vert ^{p-2}v_{l}(t,x)\right)
\end{equation*}%
\begin{equation*}
\times \eta _{r}\left( x\right) \left( v_{m}\left( t,x\right) -v_{l}\left(
t,x\right) \right) dxdt=0.
\end{equation*}%
Then, considering (2.3), we obtain%
\begin{equation*}
\int\limits_{0}^{T}\int\limits_{%
\mathbb{R}
^{n}}t\left( \Delta \left( v_{m}\left( t,x\right) -v_{l}\left( t,x\right)
\right) \right) ^{2}\eta _{r}\left( x\right) dxdt+\lambda
\int\limits_{0}^{T}\int\limits_{%
\mathbb{R}
^{n}}t\left( v_{m}\left( t,x\right) -v_{l}\left( t,x\right) \right) ^{2}\eta
_{r}\left( x\right) dxdt
\end{equation*}%
\begin{equation*}
\leq -T\int\limits_{%
\mathbb{R}
^{n}}\left( v_{mt}\left( T,x\right) -v_{lt}\left( T,x\right) \right) \eta
_{r}\left( x\right) \left( v_{m}\left( T,x\right) -v_{l}\left( T,x\right)
\right) dx
\end{equation*}%
\begin{equation*}
+\int\limits_{0}^{T}\int\limits_{%
\mathbb{R}
^{n}}t\eta _{r}\left( x\right) \left( v_{mt}\left( t,x\right) -v_{lt}\left(
t,x\right) \right) ^{2}dxdt
\end{equation*}%
\begin{equation*}
+\int\limits_{0}^{T}\int\limits_{%
\mathbb{R}
^{n}}\eta _{r}\left( x\right) \left( v_{mt}\left( t,x\right) -v_{lt}\left(
t,x\right) \right) \left( v_{m}\left( t,x\right) -v_{l}\left( t,x\right)
\right) dxdt
\end{equation*}%
\begin{equation*}
-2\sum\nolimits_{i=1}^{n}\int\limits_{0}^{T}\int\limits_{%
\mathbb{R}
^{n}}\left( \Delta \left( v_{m}\left( t,x\right) -v_{l}\left( t,x\right)
\right) \right) t\left( \eta _{r}\left( x\right) \right) _{x_{i}}\left(
v_{m}\left( t,x\right) -v_{l}\left( t,x\right) \right) _{x_{i}}dxdt
\end{equation*}%
\begin{equation*}
-\int\limits_{0}^{T}\int\limits_{%
\mathbb{R}
^{n}}\left( \Delta \left( v_{m}\left( t,x\right) -v_{l}\left( t,x\right)
\right) \right) t\Delta \left( \eta _{r}\left( x\right) \right) \left(
v_{m}\left( t,x\right) -v_{l}\left( t,x\right) \right) dxdt
\end{equation*}%
\begin{equation*}
+\frac{1}{2}\int\limits_{0}^{T}\int\limits_{%
\mathbb{R}
^{n}}\alpha \left( x\right) \eta _{r}\left( x\right) \left( v_{m}\left(
t,x\right) -v_{l}\left( t,x\right) \right) ^{2}dxdt
\end{equation*}%
\begin{equation*}
-\int\limits_{0}^{T}\int\limits_{%
\mathbb{R}
^{n}}t\left( f(\left\Vert v_{m}\left( t\right) \right\Vert _{L^{p}\left(
\mathbb{R}
^{n}\right) })\left\vert v_{m}(t,x)\right\vert ^{p-2}v_{m}(t,x)-f(\left\Vert
v_{l}\left( t\right) \right\Vert _{L^{p}\left(
\mathbb{R}
^{n}\right) })\left\vert v_{l}(t,x)\right\vert ^{p-2}v_{l}(t,x)\right)
\end{equation*}%
\begin{equation*}
\times \eta _{r}\left( x\right) \left( v_{m}\left( t,x\right) -v_{l}\left(
t,x\right) \right) dxdt,\text{ \ \ }\forall T\geq 0\text{ and }\forall r\geq
r_{0}\text{.}
\end{equation*}%
Taking into account (2.5) and (3.19) in the above inequality, we find%
\begin{equation*}
\int\limits_{0}^{T}\int\limits_{%
\mathbb{R}
^{n}}t\left( \Delta \left( v_{m}\left( t,x\right) -v_{l}\left( t,x\right)
\right) \right) ^{2}\eta _{r}\left( x\right) dxdt+\lambda
\int\limits_{0}^{T}\int\limits_{%
\mathbb{R}
^{n}}t\left( v_{m}\left( t,x\right) -v_{l}\left( t,x\right) \right) ^{2}\eta
_{r}\left( x\right) dxdt
\end{equation*}%
\begin{equation*}
\leq T\left( \left\Vert v_{mt}\left( T,x\right) -v_{lt}\left( T,x\right)
\right\Vert _{L^{2}\left(
\mathbb{R}
^{n}\right) }^{2}+\left\Vert v_{m}\left( T,x\right) -v_{l}\left( T,x\right)
\right\Vert _{L^{2}\left(
\mathbb{R}
^{n}\right) }^{2}\right)
\end{equation*}%
\begin{equation*}
+\int\limits_{0}^{T}\int\limits_{%
\mathbb{R}
^{n}}t\eta _{r}\left( x\right) \left( v_{mt}\left( t,x\right) -v_{lt}\left(
t,x\right) \right) ^{2}dxdt+\int\limits_{0}^{T}\int\limits_{%
\mathbb{R}
^{n}}\eta _{r}\left( x\right) \left( v_{mt}\left( t,x\right) -v_{lt}\left(
t,x\right) \right) ^{2}dxdt
\end{equation*}%
\begin{equation*}
+\int\limits_{0}^{T}\int\limits_{%
\mathbb{R}
^{n}}\eta _{r}\left( x\right) \left( v_{m}\left( t,x\right) -v_{l}\left(
t,x\right) \right) ^{2}dxdt+\frac{1}{2}\int\limits_{0}^{T}\int\limits_{%
\mathbb{R}
^{n}}\alpha \left( x\right) \eta _{r}\left( x\right) \left( v_{m}\left(
t,x\right) -v_{l}\left( t,x\right) \right) ^{2}dxdt
\end{equation*}%
\begin{equation}
+c_{6}\frac{T}{r}+\widetilde{K_{r}}^{m,l}\left( T\right) ,\text{ \ \ }%
\forall T\geq 0\text{ and }\forall r\geq r_{0},  \tag{3.27}
\end{equation}%
where%
\begin{equation*}
\widetilde{K_{r}}^{m,l}\left( T\right) :=\int\limits_{0}^{T}t\left(
f(\left\Vert v_{l}\left( t\right) \right\Vert _{L^{p}\left(
\mathbb{R}
^{n}\right) }-f(\left\Vert v_{m}\left( t\right) \right\Vert _{L^{p}\left(
\mathbb{R}
^{n}\right) }\right) \int\limits_{%
\mathbb{R}
^{n}}\left\vert v_{l}(t,x)\right\vert ^{p-2}v_{l}(t,x)\eta _{r}\left(
x\right)
\end{equation*}%
\begin{equation*}
\times \left( v_{m}\left( t,x\right) -v_{l}\left( t,x\right) \right) dxdt%
\text{,}
\end{equation*}%
and considering (3.19) -(3.20)$_{3}$, it is easy to see that
\begin{equation*}
\underset{m,l}{\sup }\left\Vert \widetilde{K_{r}}^{m,l}\right\Vert _{C\left[
0,T\right] }<\infty \text{ \ and }\underset{m\rightarrow \infty }{\lim }%
\underset{l\rightarrow \infty }{\lim \sup }\left\vert \widetilde{K_{r}}%
^{m,l}\left( T\right) \right\vert =0,\text{ \ }\forall T\geq 0.
\end{equation*}

Now, multiplying (3.27) by $\delta >0$ and adding to (3.26), we have%
\begin{equation*}
T\left\Vert \Delta \left( v_{m}\left( T\right) -v_{l}\left( T\right) \right)
\right\Vert _{L^{2}\left(
\mathbb{R}
^{n}\right) }^{2}+T\left\Vert v_{mt}\left( T\right) -v_{lt}\left( T\right)
\right\Vert _{L^{2}\left(
\mathbb{R}
^{n}\right) }^{2}+T\lambda \left\Vert v_{m}\left( T\right) -v_{l}\left(
T\right) \right\Vert _{L^{2}\left(
\mathbb{R}
^{n}\right) }^{2}
\end{equation*}%
\begin{equation*}
+2\alpha _{0}\int\limits_{0}^{T}\int\limits_{%
\mathbb{R}
^{n}\backslash B\left( 0,r\right) }t\left( v_{mt}\left( t\right)
-v_{lt}\left( t\right) \right) ^{2}dxdt
\end{equation*}%
\begin{equation*}
+\delta \int\limits_{0}^{T}\int\limits_{%
\mathbb{R}
^{n}}t\left( \Delta \left( v_{m}\left( t,x\right) -v_{l}\left( t,x\right)
\right) \right) ^{2}\eta _{r}\left( x\right) dxdt+\delta \lambda
\int\limits_{0}^{T}\int\limits_{%
\mathbb{R}
^{n}}t\left( v_{m}\left( t,x\right) -v_{l}\left( t,x\right) \right) ^{2}\eta
_{r}\left( x\right) dxdt
\end{equation*}%
\begin{equation*}
\leq \int\limits_{0}^{T}\left\Vert v_{mt}\left( T\right) -v_{lt}\left(
T\right) \right\Vert _{L^{2}\left(
\mathbb{R}
^{n}\right) }^{2}dt+\int\limits_{0}^{T}\left\Vert \Delta \left(
v_{m}(t)-v_{l}(t)\right) \right\Vert _{L^{2}\left(
\mathbb{R}
^{n}\right) }^{2}dt+\lambda \int\limits_{0}^{T}\left\Vert
v_{m}(t)-v_{l}(t)\right\Vert _{L^{2}\left(
\mathbb{R}
^{n}\right) }^{2}dt
\end{equation*}%
\begin{equation*}
+2\int\limits_{0}^{T}\int\limits_{%
\mathbb{R}
^{n}}t\left( f(\left\Vert v_{l}\left( t\right) \right\Vert _{L^{p}\left(
\mathbb{R}
^{n}\right) })\left\vert v_{l}(t,x)\right\vert ^{p-2}v_{l}(t,x)-f(\left\Vert
v_{m}\left( t\right) \right\Vert _{L^{p}\left(
\mathbb{R}
^{n}\right) })\left\vert v_{m}(t,x)\right\vert ^{p-2}v_{m}(t,x)\right)
\end{equation*}%
\begin{equation*}
\times \left( v_{mt}\left( t,x\right) -v_{lt}\left( t,x\right) \right) dxdt
\end{equation*}%
\begin{equation*}
+\delta T\left( \left\Vert v_{mt}\left( T,x\right) -v_{lt}\left( T,x\right)
\right\Vert _{L^{2}\left(
\mathbb{R}
^{n}\right) }^{2}+\left\Vert v_{m}\left( T,x\right) -v_{l}\left( T,x\right)
\right\Vert _{L^{2}\left(
\mathbb{R}
^{n}\right) }^{2}\right)
\end{equation*}%
\begin{equation*}
+\delta \int\limits_{0}^{T}\int\limits_{%
\mathbb{R}
^{n}}t\eta _{r}\left( x\right) \left( v_{mt}\left( t,x\right) -v_{lt}\left(
t,x\right) \right) ^{2}dxdt+\delta \int\limits_{0}^{T}\int\limits_{%
\mathbb{R}
^{n}}\eta _{r}\left( x\right) \left( v_{mt}\left( t,x\right) -v_{lt}\left(
t,x\right) \right) ^{2}dxdt
\end{equation*}%
\begin{equation*}
+\delta \int\limits_{0}^{T}\int\limits_{%
\mathbb{R}
^{n}}\eta _{r}\left( x\right) \left( v_{m}\left( t,x\right) -v_{l}\left(
t,x\right) \right) ^{2}dxdt+\frac{\delta }{2}\int\limits_{0}^{T}\int\limits_{%
\mathbb{R}
^{n}}\alpha \left( x\right) \eta _{r}\left( x\right) \left( v_{m}\left(
t,x\right) -v_{l}\left( t,x\right) \right) ^{2}dxdt
\end{equation*}%
\begin{equation}
+c_{6}\delta \frac{T}{r}+\delta \widetilde{K_{r}}^{m,l}\left( T\right) \text{%
, \ \ }\forall T\geq 0\text{ and }\forall r\geq r_{0}.  \tag{3.28}
\end{equation}%
Considering Lemma 3.2 in (3.28), for every $\gamma >0$, we get%
\begin{equation*}
T\left\Vert \Delta \left( v_{m}\left( T\right) -v_{l}\left( T\right) \right)
\right\Vert _{L^{2}\left(
\mathbb{R}
^{n}\right) }^{2}+T\left\Vert v_{mt}\left( T\right) -v_{lt}\left( T\right)
\right\Vert _{L^{2}\left(
\mathbb{R}
^{n}\right) }^{2}+T\lambda \left\Vert v_{m}\left( T\right) -v_{l}\left(
T\right) \right\Vert _{L^{2}\left(
\mathbb{R}
^{n}\right) }^{2}
\end{equation*}%
\begin{equation*}
+2\alpha _{0}\int\limits_{0}^{T}\int\limits_{%
\mathbb{R}
^{n}\backslash B\left( 0,r\right) }t\left( v_{mt}\left( t\right)
-v_{lt}\left( t\right) \right) ^{2}dxdt
\end{equation*}%
\begin{equation*}
+\delta \int\limits_{0}^{T}\int\limits_{%
\mathbb{R}
^{n}}t\left( \Delta \left( v_{m}\left( t,x\right) -v_{l}\left( t,x\right)
\right) \right) ^{2}\eta _{r}\left( x\right) dxdt+\delta \lambda
\int\limits_{0}^{T}\int\limits_{%
\mathbb{R}
^{n}}t\left( v_{m}\left( t,x\right) -v_{l}\left( t,x\right) \right) ^{2}\eta
_{r}\left( x\right) dxdt
\end{equation*}%
\begin{equation*}
\leq \int\limits_{0}^{T}\left\Vert v_{mt}\left( t\right) -v_{lt}\left(
t\right) \right\Vert _{L^{2}\left(
\mathbb{R}
^{n}\right) }^{2}dt+\int\limits_{0}^{T}\left\Vert \Delta \left(
v_{m}(t)-v_{l}(t)\right) \right\Vert _{L^{2}\left(
\mathbb{R}
^{n}\right) }^{2}dt+\lambda \int\limits_{0}^{T}\left\Vert
v_{m}(t)-v_{l}(t)\right\Vert _{L^{2}\left(
\mathbb{R}
^{n}\right) }^{2}dt
\end{equation*}%
\begin{equation*}
+\gamma \int\limits_{0}^{T}\tau E_{%
\mathbb{R}
^{n}\backslash \left( B\left( 0,2r\right) \right) }\left( v_{m}\left(
t\right) -v_{l}\left( t\right) \right) dt+c_{\gamma }\int\limits_{0}^{T}E_{%
\mathbb{R}
^{n}\backslash \left( B\left( 0,2r\right) \right) }\left( v_{m}\left(
t\right) -v_{l}\left( t\right) \right) dt
\end{equation*}%
\begin{equation*}
+c_{\gamma }\int\limits_{0}^{T}t\left( \left\Vert \sqrt{a}v_{mt}\left(
t\right) \right\Vert _{L^{2}\left(
\mathbb{R}
^{n}\right) }^{2}+\left\Vert \sqrt{a}v_{lt}\left( t\right) \right\Vert
_{L^{2}\left(
\mathbb{R}
^{n}\right) }^{2}\right) E_{%
\mathbb{R}
^{n}\backslash B\left( 0,2r\right) }\left( v_{m}\left( t\right) -v_{l}\left(
t\right) \right) dt+\left\vert K_{r}^{m,l}(T)\right\vert
\end{equation*}%
\begin{equation*}
+\delta T\left( \left\Vert v_{mt}\left( T,x\right) -v_{lt}\left( T,x\right)
\right\Vert _{L^{2}\left(
\mathbb{R}
^{n}\right) }^{2}+\left\Vert v_{m}\left( T,x\right) -v_{l}\left( T,x\right)
\right\Vert _{L^{2}\left(
\mathbb{R}
^{n}\right) }^{2}\right)
\end{equation*}%
\begin{equation*}
+\delta \int\limits_{0}^{T}\int\limits_{%
\mathbb{R}
^{n}}t\eta _{r}\left( x\right) \left( v_{mt}\left( t,x\right) -v_{lt}\left(
t,x\right) \right) ^{2}dxdt+\delta \int\limits_{0}^{T}\int\limits_{%
\mathbb{R}
^{n}}\eta _{r}\left( x\right) \left( v_{mt}\left( t,x\right) -v_{lt}\left(
t,x\right) \right) ^{2}dxdt
\end{equation*}%
\begin{equation*}
+\delta \int\limits_{0}^{T}\int\limits_{%
\mathbb{R}
^{n}}\eta _{r}\left( x\right) \left( v_{m}\left( t,x\right) -v_{l}\left(
t,x\right) \right) ^{2}dxdt+\frac{\delta }{2}\int\limits_{0}^{T}\int\limits_{%
\mathbb{R}
^{n}}\alpha \left( x\right) \eta _{r}\left( x\right) \left( v_{m}\left(
t,x\right) -v_{l}\left( t,x\right) \right) ^{2}dxdt
\end{equation*}%
\begin{equation*}
+c_{6}\delta \frac{T}{r}+\delta \widetilde{K_{r}}^{m,l}\left( T\right) \text{%
, \ \ }\forall T\geq 0\text{ and }\forall r\geq r_{0}.
\end{equation*}%
Then, for sufficiently small $\gamma $ and $\delta ,$ we obtain%
\begin{equation*}
TE_{%
\mathbb{R}
^{n}}\left( v_{m}\left( T\right) -v_{l}\left( T\right) \right) \leq
c_{7}\int\limits_{0}^{T}E_{%
\mathbb{R}
^{n}}\left( v_{m}\left( t\right) -v_{l}\left( t\right) \right) dt
\end{equation*}%
\begin{equation*}
+c_{\gamma }\int\limits_{0}^{T}t\left( \left\Vert \sqrt{a}v_{mt}\left(
t\right) \right\Vert _{L^{2}\left(
\mathbb{R}
^{n}\right) }^{2}+\left\Vert \sqrt{a}v_{lt}\left( t\right) \right\Vert
_{L^{2}\left(
\mathbb{R}
^{n}\right) }^{2}\right) E_{%
\mathbb{R}
^{n}}\left( v_{m}\left( t\right) -v_{l}\left( t\right) \right) dt
\end{equation*}%
\begin{equation*}
+c_{7}\left( \frac{T}{r}+\left\vert K_{r}^{m,l}\left( T\right) \right\vert
+\left\vert \widetilde{K_{r}}^{m,l}\left( T\right) \right\vert \right) \text{%
, \ \ }\forall T\geq 0\text{ and }\forall r\geq r_{0}\text{.}
\end{equation*}%
Now, denoting $y_{m,l}\left( t\right) :=tE_{%
\mathbb{R}
^{n}}\left( v_{m}\left( t\right) -v_{l}\left( t\right) \right) ,$ from the
previous inequality, we have%
\begin{equation*}
y_{m,l}\left( T\right) \leq c_{\gamma }\int\limits_{0}^{T}\left( \left\Vert
\sqrt{a}v_{mt}\left( t\right) \right\Vert _{L^{2}\left(
\mathbb{R}
^{n}\right) }^{2}+\left\Vert \sqrt{a}v_{lt}\left( t\right) \right\Vert
_{L^{2}\left(
\mathbb{R}
^{n}\right) }^{2}\right) y_{m,l}\left( t\right) dt
\end{equation*}%
\begin{equation*}
+c_{7}\int\limits_{0}^{T}E_{%
\mathbb{R}
^{n}}\left( v_{m}\left( t\right) -v_{l}\left( t\right) \right)
dt+c_{7}\left( \frac{T}{r}+\left\vert K_{r}^{m,l}\left( T\right) \right\vert
+\left\vert \widetilde{K_{r}}^{m,l}\left( T\right) \right\vert \right) \text{%
, \ }\forall T\geq 0\text{ and }\forall r\geq r_{0}\text{.}
\end{equation*}%
Applying Gronwall inequality and considering (2.6) and (3.19) in the above
inequality, we get%
\begin{equation*}
TE_{%
\mathbb{R}
^{n}}\left( v_{m}\left( T\right) -v_{l}\left( T\right) \right)
\end{equation*}%
\begin{equation*}
\leq c_{7}\int\limits_{0}^{T}E_{%
\mathbb{R}
^{n}}\left( v_{m}\left( t\right) -v_{l}\left( t\right) \right)
dt+c_{7}\left( \frac{T}{r}+\left\vert K_{r}^{m,l}\left( T\right) \right\vert
+\left\vert \widetilde{K_{r}}^{m,l}\left( T\right) \right\vert \right)
\end{equation*}%
\begin{equation*}
+c_{7}\int\limits_{0}^{T}\left( \int\limits_{0}^{t}E_{%
\mathbb{R}
^{n}}\left( v_{m}\left( s\right) -v_{l}\left( s\right) \right) ds+\frac{t}{r}%
+\left\vert K_{r}^{m,l}\left( t\right) \right\vert +\left\vert \widetilde{%
K_{r}}^{m,l}\left( t\right) \right\vert \right)
\end{equation*}%
\begin{equation*}
\times \left( \left\Vert \sqrt{a}v_{mt}\left( t\right) \right\Vert
_{L^{2}\left(
\mathbb{R}
^{n}\right) }^{2}+\left\Vert \sqrt{a}v_{lt}\left( t\right) \right\Vert
_{L^{2}\left(
\mathbb{R}
^{n}\right) }^{2}\right) e^{c_{\gamma }\int\nolimits_{t}^{T}\left(
\left\Vert \sqrt{a}v_{mt}\left( \tau \right) \right\Vert _{L^{2}\left(
\mathbb{R}
^{n}\right) }^{2}+\left\Vert \sqrt{a}v_{lt}\left( \tau \right) \right\Vert
_{L^{2}\left(
\mathbb{R}
^{n}\right) }^{2}\right) d\tau }ds
\end{equation*}%
\begin{equation*}
\leq c_{7}\int\limits_{0}^{T}E_{%
\mathbb{R}
^{n}}\left( v_{m}\left( t\right) -v_{l}\left( t\right) \right)
dt+c_{7}\left( \frac{T}{r}+\left\vert K_{r}^{m,l}\left( T\right) \right\vert
+\left\vert \widetilde{K_{r}}^{m,l}\left( T\right) \right\vert \right)
\end{equation*}%
\begin{equation*}
+c_{8}\int\limits_{0}^{T}E_{%
\mathbb{R}
^{n}}\left( v_{m}\left( t\right) -v_{l}\left( t\right) \right)
dt\int\limits_{0}^{T}\left( \left\Vert \sqrt{a}v_{mt}\left( t\right)
\right\Vert _{L^{2}\left(
\mathbb{R}
^{n}\right) }^{2}+\left\Vert \sqrt{a}v_{lt}\left( t\right) \right\Vert
_{L^{2}\left(
\mathbb{R}
^{n}\right) }^{2}\right)
\end{equation*}%
\begin{equation*}
+c_{8}\frac{T}{r}\int\limits_{0}^{T}\left( \left\Vert \sqrt{a}v_{mt}\left(
t\right) \right\Vert _{L^{2}\left(
\mathbb{R}
^{n}\right) }^{2}+\left\Vert \sqrt{a}v_{lt}\left( t\right) \right\Vert
_{L^{2}\left(
\mathbb{R}
^{n}\right) }^{2}\right) dt
\end{equation*}%
\begin{equation*}
+c_{8}\int\limits_{0}^{T}\left( \left\vert K_{r}^{m,l}\left( t\right)
\right\vert +\left\vert \widetilde{K_{r}}^{m,l}\left( t\right) \right\vert
\right) \left( \left\Vert \sqrt{a}v_{mt}\left( t\right) \right\Vert
_{L^{2}\left(
\mathbb{R}
^{n}\right) }^{2}+\left\Vert \sqrt{a}v_{lt}\left( t\right) \right\Vert
_{L^{2}\left(
\mathbb{R}
^{n}\right) }^{2}\right) dt
\end{equation*}%
\begin{equation*}
\leq c_{7}\left( \left\vert K_{r}^{m,l}\left( T\right) \right\vert
+\left\vert \widetilde{K_{r}}^{m,l}\left( T\right) \right\vert \right)
+c_{9}\int\limits_{0}^{T}E_{%
\mathbb{R}
^{n}}\left( v_{m}\left( t\right) -v_{l}\left( t\right) \right) dt+c_{9}\frac{%
T}{r}
\end{equation*}%
\begin{equation*}
+c_{9}\int\limits_{0}^{T}\left( \left\vert K_{r}^{m,l}\left( t\right)
\right\vert +\left\vert \widetilde{K_{r}}^{m,l}\left( t\right) \right\vert
\right) dt\text{, \ \ \ }\forall T\geq 0\text{ and }\forall r\geq r_{0}\text{%
.}
\end{equation*}%
By using Lebesgue dominated convergence theorem and considering (3.25) in
the last inequality, we obtain%
\begin{equation*}
\underset{m\rightarrow \infty }{\lim \sup }\text{ }\underset{l\rightarrow
\infty }{\lim \sup }\text{ }TE_{%
\mathbb{R}
^{n}}\left( v_{m}\left( T\right) -v_{l}\left( T\right) \right) \leq c_{10}(1+%
\frac{T}{r})\text{, \ }\forall T\geq 0\text{ and }\forall r\geq r_{0}\text{.}
\end{equation*}%
By passing to limit as $r\rightarrow \infty $ in the above inequality, we
find%
\begin{equation*}
\underset{m\rightarrow \infty }{\lim \sup }\text{ }\underset{l\rightarrow
\infty }{\lim \sup }\text{ }TE_{%
\mathbb{R}
^{n}}\left( v_{m}\left( T\right) -v_{l}\left( T\right) \right) \leq c_{10}%
\text{, \ }\forall T\geq 0,
\end{equation*}%
which gives
\begin{equation*}
\underset{m\rightarrow \infty }{\lim \sup }\text{ }\underset{l\rightarrow
\infty }{\lim \sup }\left\Vert S(T+t_{k_{m}}-T_{0})\varphi
_{k_{m}}-S(t+t_{k_{l}}-T_{0})\varphi _{k_{l}}\right\Vert _{H^{2}\left(
\mathbb{R}
^{n}\right) \times L^{2}\left(
\mathbb{R}
^{n}\right) }\leq \frac{c_{11}}{\sqrt{T}}\text{, \ }\forall T>0.
\end{equation*}%
Choosing $T=T_{0}$ in the previous inequality, we have%
\begin{equation*}
\underset{m\rightarrow \infty }{\lim \sup }\text{ }\underset{l\rightarrow
\infty }{\lim \sup }\left\Vert S(t_{k_{m}})\varphi
_{k_{m}}-S(t_{k_{l}})\varphi _{k_{l}}\right\Vert _{H^{2}\left(
\mathbb{R}
^{n}\right) \times L^{2}\left(
\mathbb{R}
^{n}\right) }\leq \frac{c_{11}}{\sqrt{T_{0}}}\text{, \ }\forall T_{0}>0.
\end{equation*}%
As a consequence, from the above sequential limit inequality, we get (3.18)
which completes the proof.
\end{proof}

Now we are in a position to complete the proof of the Theorem 2.2. Since, by
(2.3) and (2.6), problem (2.1)-(2.2) admits a strict Lyapunov function%
\begin{equation*}
\Phi \left( u\left( t\right) \right) =E_{%
\mathbb{R}
^{n}}\left( u\left( t\right) \right) +\frac{1}{p}F\left( \left\Vert u\left(
t\right) \right\Vert _{L^{p}\left(
\mathbb{R}
^{n}\right) }^{p}\right) -\int\limits_{%
\mathbb{R}
^{n}}h\left( x\right) u\left( t,x\right) dx\text{,}
\end{equation*}%
applying [6, Corollary 7.5.7], we obtain the claim of Theorem 2.2.

\bigskip


\begin{thebibliography}{99}
\bibitem{1} Z. Arat, A.Kh. Khanmamedov, S. Simsek, Global attractors for the
plate equation with nonlocal nonlinearity in unbounded domains, Dynamics of
PDE, 11 (2014), 361-379.

\bibitem{2} J. Ball, Global attractors for semilinear wave equations. Discr.
Cont. Dyn. Sys., 10 (2004) 31--52.

\bibitem{3} F. Bucci, Igor Chueshov, Long-time dynamics of a coupled system
of nonlinear wave and thermoelastic plate equations, Discrete Contin. Dyn.
Syst., 22 (2008) 557--586.

\bibitem{4} T. Cazenave, A. Haraux, An introduction to semilinear evolution
equations, Oxford University Press, New York, 1998.

\bibitem{5} I. Chueshov, S. Kolbasin, Long-time dynamics in plate models
with strong nonlinear damping, Commun. Pure Appl. Anal., 11 (2012) 659--674.

\bibitem{6} I. Chueshov, I. Lasiecka, Von Karman Evolution Equations,
Springer, Berlin, 2010.

\bibitem{7} E. Dowell, Aeroelasticity of Plates and Shells , Nordhoff,
Leyden, 1975.

\bibitem{8} E. Dowell, A Modern Course in Aeroelasticity, Springer, 2015.

\bibitem{9} A. Kh. Khanmamedov, Existence of a global attractor for the
plate equation with a critical exponent in an unbounded domain, Applied
Mathematics Letters, 18 (2005) 827-832.

\bibitem{10} A. Kh. Khanmamedov, Global attractors for the plate equation
with a localized damping and a critical exponent in an unbounded domain,
J.Differential Equations, 225 (2006) 528-548.

\bibitem{11} A.Kh. Khanmamedov, Global attractors for von Karman equations
with nonlinear interior dissipation, J. Math. Anal. Appl. 318 (2006) 92--101.

\bibitem{12} A. Kh. Khanmamedov, Global attractors for 2-D wave equations
with displacement dependent damping, Math. Methods Appl. Sci., 33 (2010)
177-187.

\bibitem{13} A. Kh. Khanmamedov, A global attractors for plate equation with
displacement-dependent damping, Nonlinear Analysis, 74 (2011) 1607--1615.

\bibitem{14} S. Kolbasin, Attractors for Kirchoff's equation with a
nonlinear damping coefficient, Nonlinear Analysis, 71 (2009) 2361-2371.

\bibitem{15} W. Krolikowski, O. Bang, Solitons in nonlocal nonlnear media:
Exact solutions, Physical Review E, 63 (2000) 016610.

\bibitem{16} T. F. Ma, V. Narciso, Global attractor for a model of
extensible beam with nonlinear damping and source terms, Nonlinear Anal., 73
(2010) 3402--3412.

\bibitem{17} T. F. Ma, V. Narciso, M. L. Pelicer, Long-time behavior of a
model of extensible beams with nonlinear boundary dissipations, J. Math.
Anal. Appl. 396 (2012) 694--703.

\bibitem{18} M. Potomkin, On transmission problem for Berger plates on an
elastic base, Journal of Mathematical Physics, Analysis, Geometry, 7 (2011)
96-102.

\bibitem{19} M. Potomkin, A nonlinear transmission problem for acompound
plate with thermoelastic part, Math. Methods Appl. Sci., 35 (2012) 530-546.

\bibitem{20} J. Simon, Compact sets in the space $L_{p}(0,T;B)$, Annali Mat.
Pura Appl., 146 (1987) 65--96.

\bibitem{21} A. Snyder, J. Mitchell, Accessible Solitons, Science, 276
(1997) 1538-1541.

\bibitem{22} L.Yang, Uniform attractor for non-autonomous plate equation
with a localized damping and a critical nonlinearity, J. Math. Anal. Appl.,
338 (2008) 1243-1254.

\bibitem{23} G. Yue, C. Zhong, Global attractors for plate equations with
critical exponent in locally uniform spaces, Nonlinear Analysis, 71 (2009)
4105--4114.
\end{thebibliography}
\end{document}